  \def\?[#1]{\textbf{[#1]}\marginpar{\Large{\textbf{??}}}}%
\numberwithin{equation}{section}
\def\arXiv#1{\href{http://arxiv.org/abs/#1}{arXiv:#1}}
\newtheorem{theo}{Theorem}
\newtheorem{prop}{Proposition}[section]
\newtheorem{lemm}[prop]{Lemma}
\def\Remark{\noindent\textbf{Remark.}\ }
\def\Remarks{\noindent\textbf{Remarks.}\ }
\let\Im=\Imag 
\DeclareMathOperator{\Op}{Op}
\let\Re=\Real 
\DeclareMathOperator{\supp}{supp}
\DeclareMathOperator{\WF}{WF}
\newcommand{\RR}{{\mathbb R}}
\newcommand{\NN}{{\mathbb N}}
\newcommand{\CC}{{\mathbb C}}
\title[Fermi Golden Rule]%
{Dynamics of resonances for 0th order pseudodifferential operators}
\author{Jian Wang }
\email{wangjian@berkeley.edu}
\address{Department of Mathematics, University of California, Berkeley,
CA 94720}
\begin{document}

\begin{abstract}
We study the dynamics of resonances of analytic perturbations of 0th order pseudodifferential operators $P(s)$. In particular, we prove a Fermi golden rule for resonances of $P(s)$ at embedded eigenvalues of $P=P(0)$. We also study the dynamics of eigenvalues of $P(t)=P+it\Delta$ as the eigenvalues converge to simple eigenvalues of $P$. The 0th order pseudodifferential operators we consider satisfy natural dynamical assumptions and are used as microlocal models of internal waves.
\end{abstract}

\maketitle

\section{Introduction}

In this note, we are interested in the dynamics of the resonances of 0th order pseudodifferential operators. 0th order pseudodifferential operators arise naturally in the study of fluid, in particular, the study of internal waves. 
\begin{figure}[hb]
    \centering
    \subfigure[Resonances $\lambda(s)$ of $P(s)$ near $0$]{
    \includegraphics[scale=0.3]{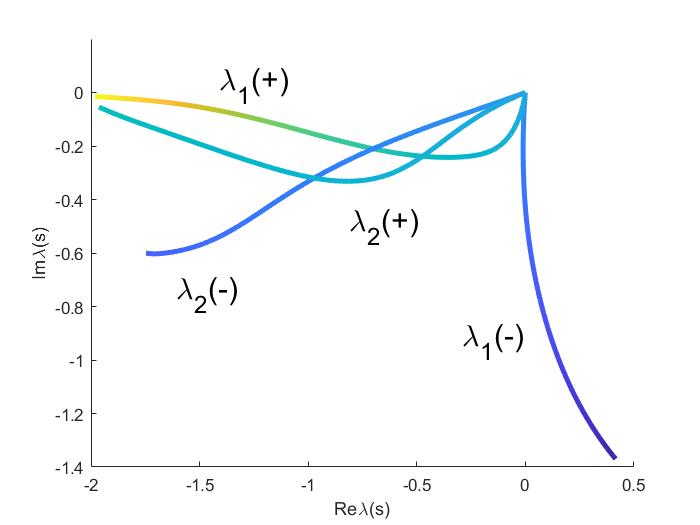}}
    \subfigure[$\Im\lambda(s)$]{
    \includegraphics[scale=0.3]{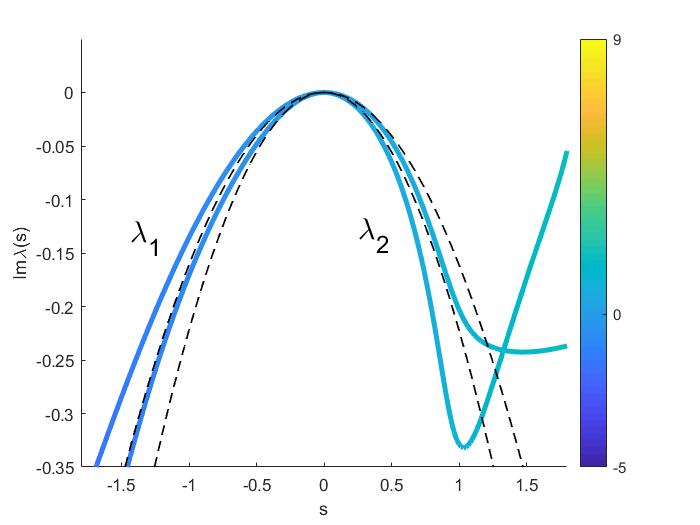}}
    \caption{Resonances of $P(s)$ near $\lambda_0=0$, where $P(s)$ is of the form \eqref{pex} with $V_m$, $V_a$ given by \eqref{multiv}, \eqref{multivv}. In this example, $0$ is an eigenvalue of $P$ with multiplicity $2$ and eigenfunctions $e^{\pm ix_1}/2\pi$. \textbf{(a)}: $\lambda_{1,2}(\pm)$ denotes resonances of $P(s)$ with $\pm s>0$, $-6.2<s<9.2$. \textbf{(b)}: $\Im\lambda_{1,2}(s)$ with small $s$. Dashed lines are the theoretic approximation of $\Im\lambda_{1,2}(s)$, see \eqref{multiimap}.}
    \label{multi}
\end{figure}
We refer to \cite{ralston} for the early work. Colin de Verdi\`ere and Saint-Raymond \cite{attractor} used 0th order pseudodifferential operators with dynamical assumptions on 2 dimensional torus as microlocal model for internal waves. Colin de Verdi\`ere \cite{co2} generalized the results to manifolds of higher dimensions. Dyatlov and Zworski \cite{force} provided alternative proofs of the results in \cite{attractor} using tools from scattering theory. Wang \cite{scattering} studied the scattering matrix for these operators.

In the first part of this paper, we consider the perturbation theory for a 0th order pseudodifferential operator $P$. We consider the case when $P$ has embedded eigenvalues $\lambda$. If $P(s)$ is a family of 0th order operators with $P=P(0)$, under certain conditions, the resonances of $P(s)$ near $\lambda$ converge to $\lambda$. If $\lambda$ has multiplicity $m>1$, we show the resonances of $P(s)$ allow expansions as Puiseux series. In the case when $\lambda$ is a simple eigenvalue of $P$, we propose and prove a Fermi golden rule -- for references of Fermi golden rules, we refer to Simon \cite{bs} for $n$-body quantum systems; Colin de Verdi\'ere \cite{co1} for the generic absence of embedded eigenvalues for surfaces with variable curvature and cusps; Phillips and Sarnak \cite{cusp} for the Laplacian operator on automorphic functions; Lee and Zworski \cite{quangraph} for quantum graphs; \cite[Theorem 4.22]{res} for a textbook style presentation of Fermi golden rule for black box scattering.

We are also interested in embedded eigenvalues $\lambda$ as limits of eigenvalues $\lambda(t)$ of $P(t)=P+it\Delta$. Galkowski and Zworski \cite{vis} defined the set of resonances of $P$ and showed the resonances can be approximated by the eigenvalues of $P(t)$. In the case when $\lambda$ is simple, we compute the first derivative of $\lambda(t)$ at $0$.

\subsection{Main results.} Let $\mathbb{T}^n=\RR^n/(2\pi\mathbb{Z})^n$ be the torus and $P(s)\in \Psi^0(\mathbb{T}^n)$ be a family of 0th order self-adjoint pseudodifferential operator on $\mathbb{T}^n$ defined by
\begin{equation}
\label{assumption1}
    P(s)u(z):=\frac{1}{(2\pi )^n}\int_{\RR^n}\int_{\RR^n}e^{i\langle z-z^{\prime},\zeta \rangle}p(s,z,\zeta)u(z^{\prime})dz^{\prime}d\zeta.
\end{equation}
Here $(s,z,\zeta)\in (-s_0,s_0)\times T^*\mathbb{T}^n$ for some $s_0>0$ and $p(s)\in S^0(T^*\mathbb{T}^n)$ is called the full symbol of $P(s)$. In the integral \eqref{assumption1} we view $p$ as a $2\pi$ periodic function in $z$ over $\RR^n$ and the integral is considered in the sense of oscillatory integrals (see \cite[\S 5.3]{semi}).
We assume that 
\begin{equation}
\label{assumption2}
    \text{$p(s)$ is analytic in $s$,}
\end{equation}
and for each $s\in(-s_0,s_0)$,
\begin{equation}\begin{split}
\label{assumption3}
    \text{$p(s)$ has an analytic continuation from $T^*\mathbb{T}^n$ to $\CC^{2n}$ such that}\\  |p(s,z,\zeta)|\leq M < \infty, \quad \text{ for } |\Im z|\leq a_1, \quad |\Im\zeta|\leq a_2\langle \Re \zeta \rangle
\end{split}\end{equation}
with $a_1, a_2>0$.
We also assume that  
\begin{equation}\begin{split}
\label{assumption4}
    & \text{there exists $F\in S^1(T^*\mathbb{T}^n)$, $C>0$ such that}\\
    & H_{p(s)}F(z,\zeta)>0, \quad \text{for } (z,\zeta)\in \{p(s,z,\zeta)=0\}\cap\{ |\zeta|>C \}.
\end{split}\end{equation}

\begin{figure}[ht]
    \centering
    \subfigure[Resonances $\lambda(s)$ of $P(s)$ near $0$]{
    \includegraphics[scale=0.3]{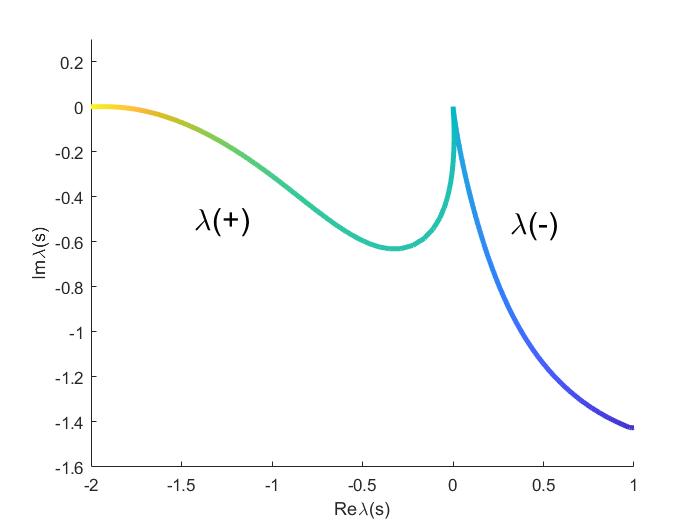}}
    \subfigure[$\Im\lambda(s)$]{
    \includegraphics[scale=0.3]{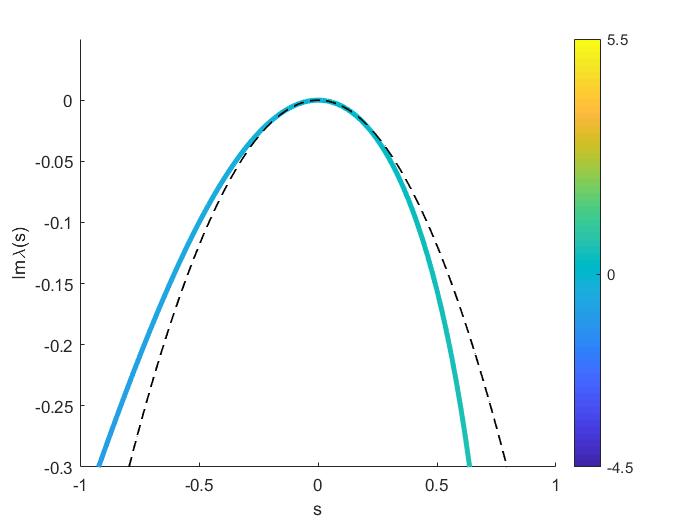}}
    \caption{Resonances of $P(s)$ where $P(s)$ is of the form \cite[(B.4)]{vis} with $V_m$, $V_a$ given by \eqref{simplev}, \eqref{simplevv}. In this case $0$ is a simple eigenvalue of $P$ with eigenfunction $e^{ix_1}/2\pi$. \textbf{(a)}: $\lambda(\pm)$ denotes the resonances of $P(s)$ with $\pm s>0$, $-4.5<s<5.5$. \textbf{(b)}: $\Im\lambda(s)$ with $s$ small. The dashed line is the theoretic approximation of $\Im\lambda(s)$, see \eqref{simpleimap}.}
    \label{simple}
\end{figure}

Motivated by definitions in \cite{hs} and \cite{den}, let $H_{\Lambda}$ be spaces come from \cite[(4.7)]{vis}, see also \S \ref{hspace}. Then by \cite[Lemma 7.4]{vis}, there exists $\zeta_0>0$ such that if $\Im \zeta>-\zeta_0 \theta$ and $|\Re \zeta|<\zeta_0$
\begin{equation}
    P(s)-\zeta: H_{\Lambda} \to H_{\Lambda}
\end{equation}
is a Fredholm operator, where $H_{\Lambda}$ is the space of hyperfunctions, see \cite[(4.7)]{vis} or \S \ref{pre}. Moreover, the resolvent of $P$
\begin{equation}
    R(s,\zeta):=(P(s)-\zeta)^{-1}: H_{\Lambda} \to H_{\Lambda}
\end{equation}
is a meromorphic family of operators in $\zeta$ for $\zeta\in (-\zeta_0, \zeta_0)+i(-\zeta_0\theta, \infty)$.
The poles of $R(s,\zeta)$ are then called the resonances of $P(s)$ in $(-\zeta_0, \zeta_0)+i(-\zeta_0\theta, \infty)$.

In this paper, we consider $P(s)$ as perturbations of $P=P(0)$. The eigenvalues of $P$ can be approximated by the resonances of $P(s)$ as $s$ goes to $0$. Moreover, we have the following

\begin{theo}
\label{theorem1}
Suppose $P(s)\in \Psi^0(\mathbb{T}^n)$ is a family of 0th order pseudodifferential operators satisfying \eqref{assumption1}, \eqref{assumption2}, \eqref{assumption3} and \eqref{assumption4}. Suppose $\lambda\in\RR$ is an embedded eigenvalue of $P=P(0)$ with multiplicity $m\geq 1$. Then there exists $\lambda_{\ell}(s)$, $1\leq \ell\leq m$ such that $\lambda_{\ell}(s)$ are resonances of $P(s)$, $\lambda_{\ell}(0)=\lambda$ and
\begin{enumerate}
    \item (Fermi golden rule) If $m=1$, then $\lambda_1$ is analytic in $(-s_1,s_1)$, $0<s_1<s_0$ and 
    \begin{equation}
        \Im\ddot{\lambda}_1=-2\Im\langle \Pi_{\perp}R(\lambda)\Pi_{\perp}\dot{P}u, \dot{P}u \rangle_{L^2{(\mathbb{T}^n)}}.
    \end{equation}
    Here $u\in H_{\Lambda}\cap L^2(\mathbb{T}^n)$ is the $L^2$ normalized eigenfunction of $P$ and $\Pi_{\perp}$ is the $L^2$ orthogonal projection onto the orthogonal complement of the eigenspace with eigenvalue $\lambda$;
    
    \item If $m>1$, then $\lambda_{\ell}$ can be labeled so that $\lambda_{\ell}$'s have Puiseux series expansions in $s$. If 
    \begin{equation}
        \lambda_{\ell}(s)=\lambda+c_1 \omega^{\ell} s^{1/p}+c_2\omega^{2\ell}s^{2/p}+\cdots, \quad 1\leq \ell\leq p,
    \end{equation}
    is a Puiseux cycle with $\omega=e^{2\pi i/p}$. Then either $p=1$ and $c_j\in \RR$, $j\in \NN$, or
    \begin{equation}
    \label{mnr}
        \lambda_{\ell}(s)=\lambda+c_ps+\cdots+c_{2Jp}s^{2J}+c_{2Jp+1}\omega^{\ell}s^{2J+1/p}+\cdots
    \end{equation}
    with $c_p, \cdots, c_{2(J-1)p}\in \RR$ and $\Im c_{2Jp}<0$.
\end{enumerate}
\end{theo}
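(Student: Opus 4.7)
The plan is a Grushin (Schur complement) reduction to a finite-dimensional effective Hamiltonian, followed by analytic perturbation theory and, for part (2), the Weierstrass preparation theorem together with Puiseux's theorem. Concretely, let $\{u_j\}_{j=1}^m \subset H_{\Lambda} \cap L^2(\mathbb{T}^n)$ be an $L^2$-orthonormal basis of the eigenspace of $P$ at $\lambda$, and define $R_+: H_{\Lambda} \to \CC^m$ by $R_+ v = (\langle v, u_j\rangle_{L^2})_{j=1}^m$ and $R_-: \CC^m \to H_{\Lambda}$ by $R_-(z_1,\dots,z_m) = \sum_j z_j u_j$. Since $\lambda$ is a simple isolated pole of the Fredholm resolvent $(P-\zeta)^{-1}: H_{\Lambda}\to H_{\Lambda}$ with residue of rank $m$, the augmented operator
\[
\mathcal{P}(s,\zeta) := \begin{pmatrix} P(s) - \zeta & R_- \\ R_+ & 0 \end{pmatrix}: H_{\Lambda} \oplus \CC^m \to H_{\Lambda} \oplus \CC^m
\]
is invertible and analytic in $(s,\zeta)$ near $(0,\lambda)$. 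Writing the inverse as $\mathcal{E}(s,\zeta) = \bigl(\begin{smallmatrix} E & E_+ \\ E_- & E_{-+} \end{smallmatrix}\bigr)$, the resonances of $P(s)$ near $\lambda$ correspond bijectively to zeros of $\det E_{-+}(s,\zeta)$ counted with multiplicity. A direct check gives $E_{-+}(0,\zeta) = (\zeta-\lambda) I_m$ and $E(0,\lambda) = \Pi_\perp (P-\lambda)^{-1} \Pi_\perp$, the regularized resolvent.

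For part (1), when $m=1$, $E_{-+}(s,\zeta)$ is scalar with $\partial_\zeta E_{-+}(0,\lambda) = 1$, so the analytic implicit function theorem produces a unique analytic $\lambda_1(s)$ solving $E_{-+}(s,\lambda_1(s))=0$ with $\lambda_1(0)=\lambda$. Expanding $\mathcal{E}$ via the Neumann series in $\mathcal{V}(s) := \mathcal{P}(s,\zeta)-\mathcal{P}(0,\zeta) = O(s)$ and extracting the $(-+)$-entry yields
\[
E_{-+}(s,\zeta) = (\zeta-\lambda) - s\langle \dot{P} u, u\rangle + s^2\bigl[\langle \Pi_\perp (P-\zeta)^{-1} \Pi_\perp \dot{P} u, \dot{P} u\rangle - \tfrac{1}{2}\langle \ddot{P} u, u\rangle\bigr] + O(s^3).
\]
Solving $E_{-+}(s,\lambda_1(s))=0$ perturbatively gives $\dot{\lambda}_1(0) = \langle \dot{P} u, u\rangle$ and $\ddot{\lambda}_1(0) = \langle \ddot{P} u, u\rangle - 2\langle \Pi_\perp R(\lambda) \Pi_\perp \dot{P} u, \dot{P} u\rangle$. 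Since $\ddot{P}(0)$ is self-adjoint on $L^2(\mathbb{T}^n)$, $\langle \ddot{P} u, u\rangle \in \RR$, and taking imaginary parts produces the Fermi golden rule formula.

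For part (2), the determinant $\det E_{-+}(s,\zeta)$ is analytic with $\det E_{-+}(0,\zeta) = (\zeta-\lambda)^m$. The Weierstrass preparation theorem factors it as $U(s,\zeta)Q(s,\zeta-\lambda)$ with $Q$ a distinguished polynomial of degree $m$ in $\zeta-\lambda$ having analytic coefficients in $s$ and $U$ analytic and nonvanishing, and Puiseux's theorem partitions the roots of $Q$ into cycles of length $p \mid m$ admitting expansions $\lambda_\ell(s) = \lambda + \sum_{k\geq 1} c_k \omega^{k\ell} s^{k/p}$, $\omega = e^{2\pi i/p}$. For the specific form \eqref{mnr}, I use that for real $s$ the self-adjointness of $P(s)$ forces $\Im\lambda_\ell(s)\leq 0$: poles of the meromorphic continuation must lie in the closed lower half-plane, with equality corresponding to $L^2$-eigenvalues. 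Averaging over the Puiseux branches yields the single-valued analytic function $\bar{\lambda}(s) = \lambda + \sum_{j\geq 1} c_{jp} s^j$, which also satisfies $\Im\bar{\lambda}(s)\leq 0$; a coefficient-by-coefficient sign analysis, using that $s^j$ changes sign for odd $j$ and is positive for even $j$, then forces $c_{jp}\in\RR$ for all $j$ less than some $2J$, with $\Im c_{2Jp}<0$. For indices $k$ not a multiple of $p$, the values $\omega^{k\ell}$ are nontrivially distributed on the unit circle as $\ell$ varies, so a nonzero $c_k$ with $k<2Jp+1$ would contribute a positive-imaginary term to some branch dominating $\Im\bar{\lambda}$; this forces $c_k=0$ for $k<2Jp+1$, $k\not\equiv 0\pmod{p}$, producing the structure \eqref{mnr}.

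The principal technical challenge is that $R(\lambda):H_{\Lambda}\to H_{\Lambda}$ is the meromorphic continuation of the resolvent and is not self-adjoint even though $P$ is; its imaginary part encodes the spectral density of $P$ at $\lambda$ and is the source of the nonzero $\Im\ddot{\lambda}_1$ in the Fermi golden rule. Setting up the Grushin problem and the Neumann expansion in the hyperfunction space $H_{\Lambda}$, rather than on $L^2$, relies crucially on the Fredholm theory from \cite{vis}; establishing analyticity of $\mathcal{E}(s,\zeta)$ in this non-standard functional framework is the main setup-level obstacle. For part (2), the sign-analysis bookkeeping to pin down the precise form of \eqref{mnr} is elementary but requires careful case work, particularly in arguing that higher-order Puiseux terms become unconstrained once the leading imaginary contribution $\Im c_{2Jp} s^{2J}$ has appeared.
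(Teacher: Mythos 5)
Your proposal follows the same architecture as the paper's proof — a Grushin reduction to an effective operator $E_{-+}(s,\zeta)$, explicit computation of its leading Taylor coefficients, Weierstrass preparation plus Puiseux's theorem for $m>1$, and the constraint $\Im\lambda_{\ell}\leq 0$ to pin down the form \eqref{mnr}. The derivative formula $\ddot{\lambda}_1 = \langle\ddot{P}u,u\rangle - 2\langle\Pi_{\perp}R(\lambda)\Pi_{\perp}\dot{P}u,\dot{P}u\rangle$ and the use of self-adjointness of $\ddot{P}$ to kill the first term on taking imaginary parts are exactly the paper's calculation. So in outline you have the right proof.

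The genuine gap is the one you flag but do not resolve: you need $R_{\pm}$ to be bounded operators on $H_{\Lambda}$ before you can even pose the Grushin problem, and for $R_+v = (\langle v,u_j\rangle_{L^2})_j$ to make sense for $v$ in the hyperfunction space $H_{\Lambda}$, the eigenfunctions $u_j$ must lie in the Gevrey-type space $\mathscr{I}_{\delta}$ (whose $L^2$-dual $\mathscr{I}_{-\delta}$ contains $H_{\Lambda}$). It is not enough that $u_j\in H_{\Lambda}\cap L^2$. The paper devotes all of Section 3 (Lemma \ref{aeig}) to establishing $u_j\in\mathscr{I}_{\delta}$ via a propagation-of-analytic-singularities argument driven by the dynamical hypothesis \eqref{assumption4} and a conjugated-FBI positive-commutator estimate. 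Without this input your asserted invertibility of $\mathcal{P}(s,\zeta)$ is unjustified, since the off-diagonal blocks are not known to be defined. A secondary, smaller point: in part (2), you average the branches to get $\bar\lambda$ and then argue about the individual branches by claiming a nonzero fractional-power coefficient would "dominate $\Im\bar\lambda$" — the constraint $\Im\lambda_{\ell}(s)\leq 0$ must be applied branch-by-branch for both signs of $s$ (both real branches of $s^{1/p}$), not to the averaged series; as written the sign bookkeeping conflates $\bar\lambda$ with $\lambda_{\ell}$, though the conclusion is correct.
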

\Remark
As one can see from the proof, in the case where $\lambda$ is a simple eigenvalue, we do not need to assume $p(s)$ is analytic in $s$, we only need $p(s)$ is smooth in $s$ and then in the theorem, $\lambda_1(s)$ is smooth in $s$ and we still have the Fermi golden rule.

Now we state the theorem for viscosity limits.
\begin{figure}[h]
    \centering
    \subfigure[Eigenvalues $\lambda(t)$ of $P(t)$ near 0]{
    \includegraphics[scale=0.3]{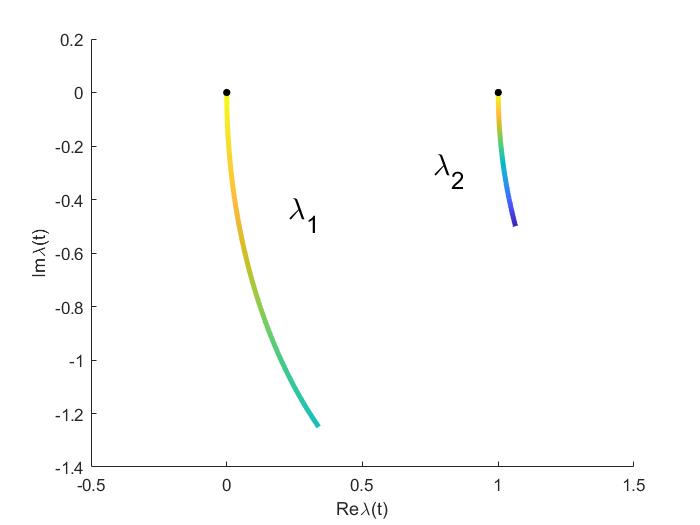}}
    \subfigure[$\Im\lambda(t)$]{
    \label{vis_im}
    \includegraphics[scale=0.3]{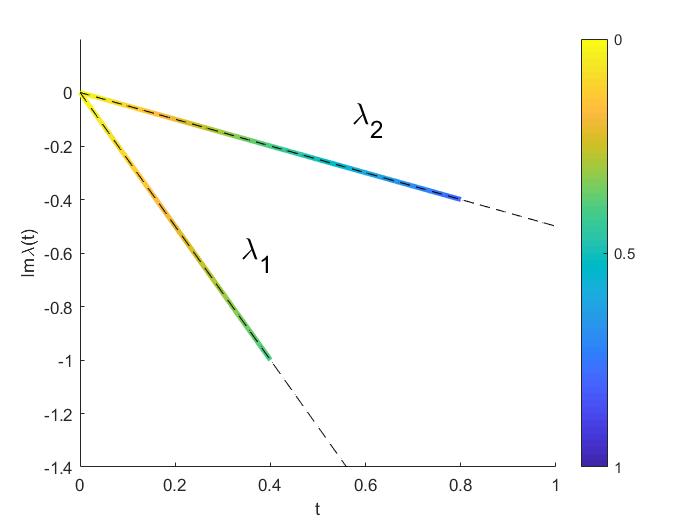}}
    \caption{Eigenvalues of $P(t)=P+it\Delta$, where $P$ is of the form \eqref{pex} with $V_m$, $V_a$ given by \eqref{visv}. In this example, $P$ has eigenvalues $\lambda_1=0$, $\lambda_2=1$ with eigenfunctions $u_1=\tfrac{1}{2\sqrt{2}\pi}\left(ie^{-ix_1}+e^{-2ix_1}\right)$, $u_2=\tfrac{1}{2\sqrt{2}\pi}\left(i+e^{ix_1}\right)$. As computed in \eqref{visex}, $\dot{\lambda}_1=-5i/2$, $\dot{\lambda}_2=-i/2$.}
    \label{vis}
\end{figure}
\begin{theo}
\label{theorem3}
Suppose $P\in \Psi^0(\mathbb{T}^n)$ is a 0th order pseudodifferential operator satisfying \eqref{assumption1}, \eqref{assumption3}, \eqref{assumption4} without dependence on $s$. Let $P(t)=P+it\Delta_{\mathbb{T}^n}$. If $\lambda\in \RR$ is an eigenvalue of $P$ with $L^2$ normalized eigenfunction $u\in H_{\Lambda}\cap L^2(\mathbb{T}^n)$. Then there exists $\lambda(t)\in C^{\infty}((0, t_0);\CC)$, $t_0>0$ such that $\lambda(t)$ is an $L^2$ eigenvalue of $P(t)$, $\lambda(t)\to \lambda_0$ as $t\to 0^+$ and 
\begin{equation}
    \dot{\lambda}=-i\|\nabla u\|_{L^2(\mathbb{T}^n)}^2
\end{equation}
where $\dot{\lambda}=\dot{\lambda}(0)$.
\end{theo}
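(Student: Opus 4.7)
The plan is to construct a smooth family of eigenpairs $(\lambda(t), u(t))$ of $P(t)$ near $(\lambda, u)$ and then differentiate the eigenvalue equation at $t = 0^+$. For the construction I would mimic the Grushin / implicit function theorem strategy used for the Fermi golden rule in Theorem~\ref{theorem1}(1), with the analytic parameter $s$ replaced by $t$. On the hyperfunction space $H_{\Lambda}$ of \cite{vis}, set $R_+ v := \langle v, u\rangle_{L^2}$ and $R_-\alpha := \alpha u$, and form
\[
\mathcal{P}(t,\zeta) := \begin{pmatrix} P(t)-\zeta & R_- \\ R_+ & 0 \end{pmatrix} : H_{\Lambda} \oplus \CC \to H_{\Lambda} \oplus \CC.
\]
At $(t,\zeta) = (0,\lambda)$, invertibility follows from \cite[Lemma 7.4]{vis} (which gives $P-\lambda$ Fredholm of index $0$ on $H_{\Lambda}$) together with the fact that $u$ spans the kernel. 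Write the inverse as $\mathcal{E} = \begin{pmatrix} E & E_+ \\ E_- & E_{-+}\end{pmatrix}$ with scalar effective Hamiltonian $E_{-+}(t,\zeta)$; the same Fredholm framework extends invertibility to a one-sided neighborhood of $(0,\lambda)$, with $\mathcal{E}$ smooth up to $t = 0^+$ and in fact analytic in $t$ since $P(t)$ is affine.

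For $t > 0$ the operator $P(t)$ is elliptic of order $2$, so any hyperfunction annihilated by $P(t) - \zeta$ is smooth and hence in $L^2$; thus the $L^2$ eigenvalues of $P(t)$ near $\lambda$ correspond to zeros of $E_{-+}(t, \cdot)$. Since $E_{-+}(0,\lambda) = 0$ and $\partial_\zeta E_{-+}(0,\lambda) = \langle u, u\rangle_{L^2} = 1 \neq 0$, the implicit function theorem produces $\lambda(t) \in C^\infty([0,t_0);\CC)$ with $\lambda(0) = \lambda$, $E_{-+}(t,\lambda(t)) = 0$, and a corresponding smooth family of eigenfunctions $u(t) := E_+(t,\lambda(t)) \cdot 1$ with $u(0) = u$. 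To compute $\dot\lambda(0)$, I differentiate $P(t) u(t) = \lambda(t) u(t)$ at $t = 0^+$ and pair with $u$: using self-adjointness of $P$ and $Pu = \lambda u$, the terms $\langle P\dot{u}(0), u\rangle$ and $\lambda\langle \dot{u}(0), u\rangle$ cancel, leaving
\[
\dot\lambda(0) = \langle \partial_t P(0)\, u, u\rangle_{L^2} = \langle i\Delta u, u\rangle_{L^2} = -i\|\nabla u\|_{L^2(\mathbb{T}^n)}^2.
\]

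The main obstacle is the uniform-in-$t$ Grushin analysis across the boundary $t = 0$: the space $H_{\Lambda}$ is tailored to the 0th order operator $P$, whereas for $t > 0$ the principal symbol of $P(t)$ is $-t|\zeta|^2$ and the operator is genuinely second order. One must verify that the Fredholm inverse on $H_{\Lambda}$ at $t = 0$ deforms smoothly as $it\Delta$ is switched on, ensuring smoothness of $E_{-+}$ up to $t = 0^+$; intuitively the 0th order symbol estimates defining $H_{\Lambda}$ should dominate the small viscosity correction in the resolvent construction near $(0,\lambda)$, but making this rigorous — tracking how the second-order term interacts with the complex Lagrangian deformation defining $H_{\Lambda}$ — is the technical core. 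A secondary point is to verify $u \in H^1(\mathbb{T}^n)$ so that $\|\nabla u\|_{L^2}^2$ is finite; this should follow from standard propagation of regularity for embedded eigenfunctions under the dynamical hypothesis~\eqref{assumption4}.
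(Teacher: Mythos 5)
Your final computation of $\dot\lambda(0)$ is correct in spirit and agrees with the paper, but the construction of the family $\lambda(t)$ has a genuine gap, which you yourself flag as ``the technical core'' and then do not resolve. The Grushin matrix you propose has $P(t)-\zeta = P + it\Delta - \zeta$ acting on $H_{\Lambda}$, but $\Delta$ is an unbounded second-order operator, and $H_{\Lambda}$ is built specifically for the $0$th order operator $P$ (via the complex deformation $\Lambda$ and the weight $e^{-2H/h}$). For $t>0$ the operator $P(t)-\zeta$ is not a bounded map $H_{\Lambda}\to H_{\Lambda}$, so ``$\mathcal{E}$ smooth up to $t=0^+$ and in fact analytic in $t$ since $P(t)$ is affine'' is precisely the claim that needs proof: the perturbation $it\Delta$ is \emph{singular}, not a bounded analytic perturbation as in Theorem~\ref{theorem1}, and Lemma~C.3 of \cite{res} (which drives the Grushin stability argument in the $s$-perturbation case) does not apply.

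The paper sidesteps this exactly where you get stuck, by importing the complex-absorbing-potential construction of \cite[Lemma 7.6]{vis}. One introduces $P_{q,t}=P+it\Delta-iQ$ with $Q=S_{\Lambda}\Pi_{\Lambda}q\Pi_{\Lambda}T_{\Lambda}$, for which $R_{q,t}(\zeta)=(P_{q,t}-\zeta)^{-1}:H_\Lambda\to H_\Lambda$ exists and is uniformly well-behaved for $t\in[0,t_0]$. Using the identity $R_{0,t}(\zeta)=(I+iR_{q,t}(\zeta)Q)^{-1}R_{q,t}(\zeta)$, one trades the singular family $P(t)-\zeta$ for the \emph{bounded}, smooth-in-$t$ family $I+iR_{q,t}(\zeta)Q$, whose non-invertibility characterizes the eigenvalues of $P(t)$. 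The Grushin problem is then set up for this bounded family (with $R_-w_-=w_- R_{q,0}(\zeta)u$), $E_{-+,q,0}(\zeta)=\zeta-\lambda$ is computed, and $\dot\lambda=-E_-\tfrac{d}{dt}(I+iR_{q,t}(\lambda)Q)E_+=-i\|\nabla u\|^2_{L^2}$ follows. Also, your secondary worry about $u\in H^1$ is already settled much more strongly by Lemma~\ref{aeig}, which gives $u\in\mathscr{I}_\delta$ for some $\delta>0$, so $\|\nabla u\|_{L^2}$ is automatically finite.
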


\subsection{Examples}
As in \cite[Example 1]{circle}, \cite[(B.4)]{vis}, we consider operators of the following form on $\mathbb{T}^2$:
\begin{equation}
\label{pex}
    P=\langle D\rangle^{-1}D_{x_2}+\sin(x_1)(I-\mathbf{V_m}(D_{x_1}))+(I-\mathbf{V_m}(D_{x_1}))\sin(x_1)+\mathbf{V_a}(D_{x_1})
\end{equation}
with $\mathbf{V_m}$, $\mathbf{V_a}$ satisfy
\begin{equation}
    \max\left\{ |\mathbf{V_m}(\xi_1)|, |\mathbf{V_a}(\xi_1)| \right\}\leq Ce^{-c|\Re \xi_1|^2}, \quad |\Im\xi_1|<b\langle \Re\xi_1 \rangle.
\end{equation}
Let $n\in \mathbb{Z}^2$ be fixed, then
\begin{equation}\begin{split}
\label{mode}
    P(e^{in\cdot x})
    = & \left( \langle n \rangle^{-1}n_2+\mathbf{V_a}(n_1) \right)e^{in\cdot x}+(1-\mathbf{V_m}(n_1))\sin(x_1)e^{in\cdot x} \\
    & + \tfrac{1-\mathbf{V_m}(n_1+1)}{2i}e^{i(n+e_1)\cdot x}-\tfrac{1-\mathbf{V_m}(n_1-1)}{2i}e^{i(n-e_1)\cdot x}\\
    = & \left( \langle n \rangle^{-1}n_2+\mathbf{V_a}(n_1) \right)e^{in\cdot x}+\tfrac{2-\mathbf{V_m}(n_1)-\mathbf{V_m}(n_1+1)}{2i}e^{i(n+e_1)\cdot x}\\ & -\tfrac{2-\mathbf{V_m}(n_1)-\mathbf{V_m}(n_1-1)}{2i}e^{i(n-e_1)\cdot x}
\end{split}\end{equation}
with $e_1=(1,0)\in \mathbb{Z}^2$.
Suppose $\mathbf{V_m}(s,\xi_1)$ is an analytic perturbation of $\mathbf{V_m}$ and $P(s)$ is the perturbation of $P$ with $\mathbf{V_m}$ replaced by $\mathbf{V_m}(s,\cdot)$, then we have
\begin{equation}\begin{split}
\label{modep}
    \dot{P}(e^{in\cdot x})
    = & -\left(\sin(x_1) \mathbf{\dot V_m}(D_{x_1})+\mathbf{\dot{V}_m}(D_{x_1})\sin(x_1)\right)(e^{in\cdot x})\\
    = & \tfrac{\mathbf{\dot{V}_m}(n_1)+\mathbf{\dot{V}_m}(n_1-1)}{2i}e^{i(n-e_1)\cdot x} 
    -\tfrac{\mathbf{\dot{V}_m}(n_1)+\mathbf{\dot{V}_m}(n_1+1)}{2i}e^{i(n+e_1)\cdot x}.
\end{split}\end{equation}

\noindent
\textbf{Example 1} (Simple eigenvalues). We first put
\begin{equation}
\label{simplev}
    \mathbf{V_m}(\xi_1)=2\sin^2\left(\tfrac{\pi\xi_1}{2}\right)e^{-(\xi_1-1)^2}, \quad \mathbf{V_a}(\xi_1)=5(\xi_1-1)e^{-\xi_1^2}.
\end{equation}
By \eqref{mode}, $0$ is a simple eigenvalue of $P$ with eigenfunction $u(x)=e^{ix_1}/2\pi$. We consider the following perturbation of $\mathbf{V_m}$:
\begin{equation}
\label{simplevv}
    \mathbf{V_m}(s,\xi_1)=\mathbf{V_m}(\xi_1)+se^{-(\xi_1-1)^2}.
\end{equation}
Figure \ref{simple} shows a numerical illustration of the resonances of $P(s)$ near the eigenvalue $0$. The numerical computation is based on the method of complex scaling, see \cite[Appendix B]{vis}.
To compute $\ddot{\lambda}$ in Theorem \eqref{theorem1}, we notice that by \eqref{modep}, we have
\begin{equation}
    \dot{P}(u)=\tfrac{1+e^{-1}}{4\pi i}\left(1-e^{2ix_1}\right).
\end{equation}
Now we find approximate values of $a_n\in \CC$ satisfying
\begin{equation*}
    R(0)\dot P u=\sum a_ne^{in\cdot x}.
\end{equation*}
Since $\dot Pu$ is a function that is independent of $x_2$ and the only dependence of $P$ on $x_2$ is the term $\langle D \rangle^{-1}D_{x_2}$, we can ignore $x_2$ and replace $x_1$ by $x$ in the following computation.
For $\epsilon>0$, we now find $v(\epsilon)=\sum_{n\in\mathbb{Z}}a_n(\epsilon) e^{inx}\in L^2$ such that $(P-i\epsilon)v=\dot P u$. Notice that
\begin{equation*}
    (P-i\epsilon)v=\sum \left( \tfrac{2-\mathbf{V_m}(n-1)-\mathbf{V_m}(n)}{2i}a_{n-1}+(\mathbf{V_a}(n)-i\epsilon)a_n-\tfrac{2-\mathbf{V_m}(n)-\mathbf{V_m}(n+1)}{2i}a_{n+1}\right)e^{inx}.
\end{equation*}
For fixed $N>0$, we consider $a_n=a_n(\epsilon, N)$ satisfying the following equations:
$$
    a_n=\left( \tfrac{\epsilon-\sqrt{\epsilon^2+4}}{2} \right)^{n-N}a_N, \  n\geq N, \ a_n=\left( \tfrac{\epsilon+\sqrt{\epsilon^2+4}}{2} \right)^{n+N}a_{-N}, \ n\leq -N. 
$$
$$\tfrac{2-\mathbf{V_m}(n-1)-\mathbf{V_m}(n)}{2i}a_{n-1}+(\mathbf{V_a}(n)-i\epsilon)a_n-\tfrac{2-\mathbf{V_m}(n)-\mathbf{V_m}(n+1)}{2i}a_{n+1}=k_n, \ -N\leq n\leq N, $$
$$k_n=0, \ -N\leq n\leq N, N\neq 0,2, \ \ k_0=\tfrac{1+e^{-1}}{4\pi i}, \ k_2=-\tfrac{1+e^{-1}}{4\pi i}.$$
Let $w(\epsilon, N):=\sum a_n(\epsilon, N)e^{inx}$, then $w\in L^2$ and
\begin{equation*}\begin{split}
    & r(\epsilon,N):=(P-i\epsilon)w(\epsilon, N)-\dot Pu \\
    = & \sum_{|n|\geq N+1}\left( -\tfrac{\mathbf{V_m}(n-1)+\mathbf{V_m}(n)}{2i}a_{n-1}+\mathbf{V_a}(n)a_n+\tfrac{\mathbf{V_m}(n)+\mathbf{V_m}(n+1)}{2i}a_{n+1} \right)e^{inx}.
\end{split}\end{equation*}
For $n=0,2$, we have
\begin{equation*}\begin{split}
    & |a_n(\epsilon, N)-a_n|
    \leq |a_n(\epsilon,N)-a_n(\epsilon)|+|a_n(\epsilon)-a_n|\\
    \leq & C\|(P-i\epsilon)^{-1}r(\epsilon,N)\|_{H^{-1/2-}}+C\|(P-i\epsilon)^{-1}\dot Pu-R(0)\dot Pu\|_{H^{-1/2-}}.\\
\end{split}\end{equation*}
Note that
\begin{equation*}
    (P-i\epsilon)^{-1}r(\epsilon,N)=\left((P+I)\Pi_{\perp}-I-i\epsilon\right)^{-1}r(\epsilon,N),
\end{equation*}
Since $0$ is not an eigenvalues of $(P+I)\Pi_{\perp}-I$, by the proof of \cite[Proposition 3.1]{scattering}, we have
\begin{equation*}
    \|\left((P+I)\Pi_{\perp}-I-i\epsilon\right)^{-1}r(\epsilon,N)\|_{H^{-1/2-}}\leq C\|r(\epsilon,N)\|_{H^{1/2+}}\to 0, \ \ \epsilon\to 0, \ N\to\infty.
\end{equation*}
By the same proposition, see also \cite[Lemma 3.3]{force}, we also have
\begin{equation*}
    \|(P-i\epsilon)^{-1}\dot Pu-R(0)\dot Pu\|_{H^{-1/2-}}\to 0, \  \epsilon \to 0.
\end{equation*}
Thus we find
\begin{equation}
    \lim_{\epsilon\to 0, \ N\to \infty}\left( |a_0(\epsilon,N)-a_0|+|a_2(\epsilon,N)-a_2| \right)=0.
\end{equation}
To get an approximate value of $a_0$, $a_2$ in our example, we put $N=3$ and set $\epsilon\to 0$ and find $a_0\approx 0.0003+0.0230i$, $a_2\approx -0.1100+0.0101i$. As a result we find
\begin{equation*}
    \Im\ddot{\lambda}=-2\Im\langle \Pi_{\perp}R(0)\Pi_{\perp}\dot Pu, \dot Pu \rangle_{L^2}=2\pi(1+e^{-1})\Re(a_2-a_0)\approx -0.9479.
\end{equation*}
Therefore
\begin{equation}
\label{simpleimap}
    \Im\lambda = \tfrac12 \left(\Im\ddot{\lambda}\right) s^2+O(s^3)\approx -0.4739 s^2+O(s^3).
\end{equation}
The results are shown in Figure \ref{simple}.

\noindent
\textbf{Example 2} (Eigenvalues with multiplicities). Now we consider
\begin{equation}
\label{multiv}
    \mathbf{V_m}(\xi_1)=2\sin^2(\tfrac{\pi\xi_1}{2})e^{-(\xi_1^2-1)^2}, \quad \mathbf{V_a}(\xi_1)=6(1-\xi_1^2)e^{-(\xi_1-1)^2}.
\end{equation}
In this case, by \eqref{mode}, $\lambda=0$ is an eigenvalue of $P$ with eigenfunction $u_1=e^{-ix}/2\pi$, $u_2=e^{ix}/2\pi$.
Let $\mathbf{V_m}(s,\xi_1)$ be a perturbation of $P$ as follows:
\begin{equation}
\label{multivv}
    \mathbf{V_m}(s,\xi_1)=\mathbf{V_m}(\xi_1)+s\left( e^{-(\xi_1+4/5)^2}+\tfrac45 e^{-(\xi_1-4/5)^2} \right).
\end{equation}
Figure \ref{multi} shows a numerical result of the resonances of $P(s)$ near $0$. As predicted in Theorem \ref{theorem1}, the resonances of $P(s)$ branches into Puiseux series of cycle $p=2$.
In fact, by the proof of Theorem \ref{theorem1} and \eqref{fpm}, \eqref{epm}, \eqref{ginv}, we have
\begin{equation}
\begin{split}
    F_{-+}
    =  E_{-+}+\sum_{k=1}^{\infty}(-1)^{k}s^kE_-\dot{P}(E\dot{P})^{k-1}E_+
    =  \lambda I_2-sA(s)
\end{split}
\end{equation}
with
\begin{equation}
    A(s)=\sum_{k=0}^{\infty}(-1)^ks^k\begin{pmatrix} \langle \dot{P}(E\dot{P})^k u_1,u_1 \rangle & \langle \dot{P}(E\dot{P})^k u_1,u_2 \rangle \\ \langle \dot{P}(E\dot{P})^k u_2,u_1 \rangle & \langle \dot{P}(E\dot{P})^ku_2,u_2 \rangle \end{pmatrix}=:\begin{pmatrix} A_{11} & A_{12} \\ A_{21} & A_{22} \end{pmatrix}.
\end{equation}
Thus
\begin{equation}
    \textrm{det}(F_{-+})=\lambda^2-(A_{11}+A_{22})s\lambda+\textrm{det}(A)s^2.
\end{equation}
Put $\textrm{det}(F_{-+})=0$, use the fact that $A(0)=0$ in this example, and we find,
\begin{equation}
    \lambda_{1,2}= \tfrac{\dot{A}_{11}+\dot{A}_{22}\pm\sqrt{(\dot{A}_{11}-\dot{A}_{22})^2+4\dot{A}_{12}\dot{A}_{21}}}{2}s^2+O(s^{5/2}), \  \dot{A}_{ij}=-\langle \dot{P}E\dot{P}u_i, u_j \rangle.
\end{equation}
By \eqref{modep}, we have
\begin{equation*}
    \dot{P}(e^{-ix_1})=\tfrac{\mathbf{\dot{V}_m}(-2)+\mathbf{\dot{V}_m}(-1)}{2i}e^{-2ix}-\tfrac{\mathbf{\dot{V}_m}(-1)+\mathbf{\dot{V}_m}(0)}{2i}, \ \ \dot{P}(e^{ix_1})=\tfrac{\mathbf{\dot{V}_m}(0)+\mathbf{\dot{V}_m}(1)}{2i}-\tfrac{\mathbf{\dot{V}_m}(1)+\mathbf{\dot{V}_m}(2)}{2i}e^{2ix}.
\end{equation*}
Now we find $a_n$, $b_n$ such that
\begin{equation*}
    R(0)P(e^{-ix})=\sum a_ne^{inx}, \ \  R(0)P(e^{ix})=\sum b_ne^{inx}.
\end{equation*}
By the same analysis in Example 1, we solve 
$$a_{-3}=\left(\tfrac{\epsilon+\sqrt{\epsilon^2+4}}{2}\right)a_{-2}, \ \ b_{3}=\left( \tfrac{\epsilon-\sqrt{\epsilon^2+4}}{2} \right)b_2, \ \ a_2=0, \ \ b_{-2}=0,$$
$$ \tfrac{2-\mathbf{V_m}(-3)-\mathbf{V_m}(-2)}{2i}a_{-3}+(\mathbf{V_a}(-2)-i\epsilon)a_{-2}-\tfrac{2-\mathbf{V_m}(-2)-\mathbf{V_m}(-1)}{2i}a_{-1}=\tfrac{\mathbf{\dot{V}_m}(-2)+\mathbf{\dot{V}_m}(-1)}{2i}, $$
$$ \tfrac{2-\mathbf{V_m}(-1)-\mathbf{V_m}(0)}{2i}a_{-1}+(\mathbf{V_a}(0)-i\epsilon)a_{0}-\tfrac{2-\mathbf{V_m}(0)-\mathbf{V_m}(1)}{2i}a_{1}= -\tfrac{\mathbf{\dot{V}_m}(-1)+\mathbf{\dot{V}_m}(0)}{2i},$$
$$ \tfrac{2-\mathbf{V_m}(-1)-\mathbf{V_m}(0)}{2i}b_{-1}+(\mathbf{V_a}(0)-i\epsilon)b_{0}-\tfrac{2-\mathbf{V_m}(0)-\mathbf{V_m}(1)}{2i}b_{1}= \tfrac{\mathbf{\dot{V}_m}(0)+\mathbf{\dot{V}_m}(1)}{2i},$$
$$ \tfrac{2-\mathbf{V_m}(1)-\mathbf{V_m}(2)}{2i}b_1+(\mathbf{V_a}(2)-i\epsilon)b_{2}-\tfrac{2-\mathbf{V_m}(2)-\mathbf{V_m}(3)}{2i}b_{3}=-\tfrac{\mathbf{\dot{V}_m}(1)+\mathbf{\dot{V}_m}(2)}{2i} $$
and let $\epsilon\to 0$ and find approximately
\begin{equation*}
    a_{-2}\approx 0.6147+0.0014i, \quad a_0\approx 0.4397i, \quad b_0\approx 0.3980i, \quad b_2\approx-0.0111-0.0737i.
\end{equation*}
Now we have
\begin{equation*}\begin{split}
    & \dot{A}_{11}=\tfrac{\mathbf{\dot V_m}(-2)+\mathbf{\dot V_m}(-1)}{2i}a_{-2}-\tfrac{\mathbf{\dot V_m}(-1)+\mathbf{\dot V_m}(0)}{2i}a_0, \quad \dot A_{12}=\tfrac{\mathbf{\dot V_m}(0)+\mathbf{\dot V_m}(1)}{2i}a_0,\\
    & \dot A_{21}=-\tfrac{\mathbf{\dot V_m}(-1)+\mathbf{\dot V_m}(0)}{2i}b_0, \quad \dot A_{22}=\tfrac{\mathbf{\dot V_m}(0)+\mathbf{\dot V_m}(1)}{2i}b_0-\tfrac{\mathbf{\dot V_m}(1)+\mathbf{\dot V_m}(2)}{2i}b_2.
\end{split}\end{equation*}
Inserting the values of $a_{-2}$, $a_0$, $b_0$, $b_2$ and we find
\begin{equation*}
\dot A\approx\begin{pmatrix} -0.4260-0.3778i & 0.3863\\0.3863 & -0.3129-0.0055i \end{pmatrix}.
\end{equation*}
Therefore
\begin{equation}
\label{multiimap}
    \Im\lambda_1\approx -0.1611s^2+O(s^{5/2}), \quad \Im\lambda_2\approx -0.2222s^2+O(s^{5/2}).
\end{equation}
The results are shown in Figure \ref{multi}.

\noindent
\textbf{Example 3} (Simple eigenvalues and operators with viscosity). Let $P$ be as in Example 1, that is, $\mathbf{V_m}$, $\mathbf{V_a}$ are given by $\eqref{simplev}$. If we add the viscosity to $P$ and consider $P(t)=P+it\Delta_{\mathbb{T}^2}$, then
\begin{equation}
    P(t)(e^{ix_1})=it\Delta_{\mathbb{T}^2}(e^{ix_1})=-ite^{ix_1}.
\end{equation}
Therefore $\lambda(t)=-it$ is the eigenvalue of $P(t)$ near $0$. Hence $\dot{\lambda}=-i$. This verifies the formula in Theorem \ref{theorem3}. As a less trivial example, we consider
\begin{equation}\begin{split}
\label{visv}
    \mathbf{V_m}(\xi_1) & =(\xi_1+3)(\xi_1-2)(\tfrac12\xi_1^3+\tfrac{7}{12}\xi_1^2-\tfrac{11}{12}\xi_1-\tfrac{2}{3})e^{-\xi_1(\xi_1+3)(\xi_1^2-1)(\xi_1^2-4)},\\
    \mathbf{V_a}(\xi_1) & =(-\tfrac{2}{3}\xi_1^3-\xi_1^2+\tfrac{5}{3}\xi_1+3)e^{-\xi_1(\xi_1^2-1)(\xi_1+2)}.
\end{split}\end{equation}
Then $P$ has eigenvalues $\lambda_1=0$, $\lambda_2=1$ with eigenfunctions $u_1=\tfrac{1}{2\sqrt{2}\pi}\left(ie^{-ix_1}+e^{-2ix_1}\right)$, $u_2=\tfrac{1}{2\sqrt{2}\pi}\left(i+e^{ix_1}\right)$. Thus
\begin{equation}
\label{visex}
    \dot{\lambda}_1=-i\|\nabla u_1\|_{L^2(\mathbb{T}^2)}^2=-\tfrac{5i}{2}, \quad \dot{\lambda}_2=-i\|\nabla u_2\|_{L^2(\mathbb{T}^2)}^2=-\tfrac{i}{2}.
\end{equation}
Figure \ref{vis} shows the numerical results of the eigenvalues of $P(t)$ near $0$. Figure \ref{vis_im} justifies \eqref{visex}.

\subsection{Organization of the paper}
In \S \ref{pre}, we review the construction of the space of hyperfunctions and Grushin problem briefly. In \S \ref{anal}, we show the analyticity of the eigenfunctions of $P$. In \S \ref{pf1}, we give a proof to Theorem \ref{theorem1}. In \S \ref{pf2}, we prove Theorem \ref{theorem3}.

\medskip\noindent\textbf{Acknowledgements.}
I would like to thank Maciej Zworski for suggesting this problem, for helpful advice and for his help in Matlab experiments. Partial support by the National Science Fundation grant DMS-1952939 is also gratefully acknowledged.

\section{preliminaries}
\label{pre}

\subsection{Space of hyperfunctions}
\label{hspace}
We first review the function spaces $H_{\Lambda}$ constructed in \cite[(4.7)]{vis}.

Let $\widetilde{T^*\mathbb{T}^n}=\{ (z,\zeta): z\in \CC^n/2\pi \mathbb{Z}^n, \zeta\in \CC^n \}$. Let $\sigma$ be the complex symplectic form over $\widetilde{T^*\mathbb{T}^n}$.
We assume the function $F$ in \eqref{assumption4} satisfies
\begin{equation}
    \sup_{|\alpha|+|\beta|\leq 2}\langle \xi \rangle^{-1+|\beta|}|\partial_x^{\alpha}\partial_{\xi}^{\beta}F(x,\xi)|\leq \epsilon_0, \quad \sup_{\alpha, \beta}\langle \xi \rangle^{-1+|\beta|}|\partial_x^{\alpha}\partial_{\xi}^{\beta}F(x,\xi)|\leq C_{\alpha, \beta}
\end{equation}
with $\epsilon_0>0$ small enough.
We put
\begin{equation}
    \Lambda:=\{ (x+iF_{\xi}(x,\xi), \xi-iF_{x}(x,\xi)): (x,\xi)\in T^*\mathbb{T}^n \}\subset \widetilde{T^*\mathbb{T}^n},
\end{equation}
Then 
\begin{equation}
    \Im(\sigma|_{\Lambda})\equiv 0, \quad \Re (\sigma|_{\Lambda}) \text{ is non-degenerate.}
\end{equation}
Let $d\alpha$ be the nutural volume form on $\Lambda$ defined by $d\alpha=(\sigma|_{\Lambda})^n/n!$. Let 
\begin{equation}
    H(x,\xi)=F(x,\xi)-\xi\cdot F_{\xi}(x,\xi)\in C^{\infty}(\Lambda; \RR).
\end{equation}
Let $T_{\Lambda}$ be the complex deformation of the FBI transform defined in \cite[\S 4]{vis}.

We now define a function space
\begin{equation}
    \mathscr{I}_{\delta}=\{ u\in L^2(\mathbb{T}^n): \|u\|_{\mathscr{I}_{\delta}}<\infty \},
\end{equation}
where
\begin{equation}
    \|u\|_{\mathscr{I}_{\delta}}^2:=\sum_{n\in \mathbb{Z}^n}|\widehat{u}(n)|^2e^{4|n|\delta}, \quad \widehat{u}(n)=\frac{1}{(2\pi)^n}\int_{\mathbb{T}^n}u(x)e^{-i\langle x,n \rangle}dx.
\end{equation}
Let $\epsilon_0$ be sufficiently small such that $T_{\Lambda}$ has the mapping property as in \cite[Lemma 4.1]{vis} for $0<\delta<\delta_0$. Then the space $H_{\Lambda}$ is defined as the closure of $\mathscr{I}_{\delta}$ with respect to the norm $\|\cdot\|_{H_{\Lambda}}$ given by the formula
\begin{equation}
    \|u\|_{H_{\Lambda}}:=\int_{\Lambda}|T_{\Lambda}u(\alpha)|^2e^{-2H(\alpha)/h}d\alpha.
\end{equation}

\subsection{Grushin problems}
We briefly review Grushin problems, for a complete introduction, see for instance \cite[\S C.1]{res}. See also \cite{elementary} for applications of Grushin problems.

Suppose $P: X_1\to X_2$, $R_-: X_-\to X_2$, $R_+: X_1\to X_+$ are bounded operators on Banach spaces $X_1, X_2, X_-, X_+$. 
We call the equation
\begin{equation}
\label{grushin}
    \begin{pmatrix} P & R_- \\ R_+ & 0 \end{pmatrix}\begin{pmatrix} u \\ u_- \end{pmatrix}=\begin{pmatrix} v\\v_+ \end{pmatrix}
\end{equation}
a Grushin problem. We call the Grushin problem \eqref{grushin} well-posed if it is invertible, and in this case we write the inverse as
\begin{equation}
\label{grushininv}
    \begin{pmatrix} u\\u_- \end{pmatrix}=\begin{pmatrix} E & E_+ \\ E_- & E_{-+} \end{pmatrix}\begin{pmatrix} v\\v_+ \end{pmatrix},
\end{equation}
with operators $E: X_2\to X_1$, $E_+: X_+\to X_1$, $E_-: X_2\to X_-$, $X_+\to X_-$. We also know that $P$ is invertible if and only if $E_{-+}$ is invertible. Moreover, when $P$ and $E_{-+}$ are invertible, we have
\begin{equation}
    P^{-1}=E-E_+E_{-+}^{-1}E_-.
\end{equation}
We also record the following formula for the perturbed Grushin problem
\begin{lemm}(\cite[Lemma C.3]{res})
Suppose the Grushin problem \eqref{grushin} is well-posed and \eqref{grushininv} is the inverse. If $B: X_1\to X_2$ is a bounded operator such that
\begin{equation}
    \|EB\|_{X_1\to X_1}<1, \quad \|BE\|_{X_2\to X_2}<1,
\end{equation}
then the Grushin problem
\begin{equation}
    \begin{pmatrix} P+B & R_- \\ R_+ & 0 \end{pmatrix}: X_1\times X_-\to X_2\times X_+
\end{equation}
is well-posed with inverse
\begin{equation}
    \begin{pmatrix} F& F_+\\ F_-& F_{-+} \end{pmatrix}
\end{equation}
such that
\begin{equation}
\label{fpm}
    F_{-+}=E_{-+}+\sum_{\ell=1}^{\infty}(-1)^{\ell}E_-B(EB)^{\ell-1}E_+.
\end{equation}
\end{lemm}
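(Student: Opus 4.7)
My plan is to repackage $B$ as a $(1,1)$-block perturbation of the Grushin matrix and invert the resulting operator by a Neumann series. Setting
$$\mathcal{P}:=\begin{pmatrix} P & R_-\\ R_+ & 0\end{pmatrix},\qquad \mathcal{E}:=\begin{pmatrix} E & E_+\\ E_- & E_{-+}\end{pmatrix},\qquad \mathcal{B}:=\begin{pmatrix} B & 0\\ 0 & 0\end{pmatrix},$$
the hypotheses say $\mathcal{E}=\mathcal{P}^{-1}$, and I can factor the perturbed operator as $\mathcal{P}+\mathcal{B}=\mathcal{P}(I+\mathcal{E}\mathcal{B})$, reducing its invertibility to that of $I+\mathcal{E}\mathcal{B}$ on $X_1\times X_-$.

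I would next compute, by an immediate induction, that
$$(\mathcal{E}\mathcal{B})^k=\begin{pmatrix}(EB)^k & 0\\ E_-B(EB)^{k-1} & 0\end{pmatrix},\qquad k\geq 1.$$
The vanishing second column turns the question into one purely about $I+EB$ on $X_1$: the hypothesis $\|EB\|_{X_1\to X_1}<1$ yields $(I+EB)^{-1}=\sum_{k\geq 0}(-1)^k(EB)^k$ by standard Neumann series, which in turn delivers a norm-convergent inverse $(I+\mathcal{E}\mathcal{B})^{-1}=\sum_{k\geq 0}(-1)^k(\mathcal{E}\mathcal{B})^k$. The parallel factorization $\mathcal{P}+\mathcal{B}=(I+\mathcal{B}\mathcal{E})\mathcal{P}$ combined with $\|BE\|_{X_2\to X_2}<1$ gives invertibility of $I+\mathcal{B}\mathcal{E}$ on $X_2\times X_+$, so $\mathcal{P}+\mathcal{B}$ has both a left and a right inverse, which automatically coincide.

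Finally I would set $\mathcal{F}:=(I+\mathcal{E}\mathcal{B})^{-1}\mathcal{E}=\sum_{k\geq 0}(-1)^k(\mathcal{E}\mathcal{B})^k\mathcal{E}$, multiply out block-by-block, and read off the $(2,2)$ entry as
$$F_{-+}=E_{-+}+\sum_{k=1}^{\infty}(-1)^k E_-B(EB)^{k-1}E_+,$$
which is exactly \eqref{fpm}. The main obstacle is really nothing beyond careful bookkeeping of the block products and verifying convergence in the chosen operator-norm topology; once the factorization $\mathcal{P}+\mathcal{B}=\mathcal{P}(I+\mathcal{E}\mathcal{B})$ is in hand, the closed form for $F_{-+}$ is forced by the column-zero structure of $\mathcal{B}$.
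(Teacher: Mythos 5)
Your proof is correct, and since the paper only cites this as \cite[Lemma C.3]{res} without reproducing a proof, there is nothing in the paper to compare against; your argument is the standard one. The factorization $\mathcal{P}+\mathcal{B}=\mathcal{P}(I+\mathcal{E}\mathcal{B})$, the block formula for $(\mathcal{E}\mathcal{B})^k$, and the Neumann series computation of the $(2,2)$ block all check out, and your observation that the vanishing second column of $\mathcal{E}\mathcal{B}$ forces each block of $\sum_{k\ge 0}(-1)^k(\mathcal{E}\mathcal{B})^k$ to converge under $\|EB\|<1$ alone (even though $\|\mathcal{E}\mathcal{B}\|$ itself may exceed $1$) is exactly the right point to make explicit.
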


\section{analyticity of eigenfunctions}
\label{anal}

In this section we prove the analyticity of the eigenfunctions of $P$. More precisely,
\begin{lemm}
\label{aeig}
Suppose $u\in H_{\Lambda}\cap L^2(\mathbb{T}^n)$ is an eigenfunction of $P$ with eigenvalue $\lambda$. Then $u\in \mathscr{I}_{\delta}$ for some $\delta>0$.
\end{lemm}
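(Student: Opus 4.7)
The strategy is to upgrade the $L^2$-regularity of the eigenfunction $u$ to real-analyticity (i.e., exponential decay of its Fourier coefficients), by combining the analyticity of the symbol granted by \eqref{assumption3} with a Grushin-type bootstrap argument.

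My first step is to pass to the Fourier side. Writing $u = \sum_{n \in \mathbb{Z}^n} \hat{u}(n)\, e^{i\langle n, z\rangle}$ with $\hat u \in \ell^2$, and expanding $p(z, m) = \sum_{k} \hat{p}(k, m)\, e^{i\langle k, z\rangle}$ for $m \in \mathbb{Z}^n$, a direct computation from \eqref{assumption1} transforms the equation $(P-\lambda)u = 0$ into the infinite linear system
\begin{equation*}
\sum_{m \in \mathbb{Z}^n} \bigl(\hat{p}(n-m, m) - \lambda\, \delta_{n,m}\bigr)\, \hat{u}(m) = 0, \qquad n \in \mathbb{Z}^n.
\end{equation*}
Assumption \eqref{assumption3} together with a Paley--Wiener contour shift supplies the crucial decay $|\hat{p}(k, m)| \leq M e^{-a_1 |k|}$ uniformly in $m$. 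Consequently $P$ is bounded on $\mathscr{I}_\delta$ for every $0 < \delta < a_1/2$: conjugating the matrix kernel $\hat{p}(n-m,m)$ by the weight $e^{2\delta|n|}$ produces a kernel bounded by $M e^{-(a_1 - 2\delta)|n-m|}$, which defines a bounded operator on $\ell^2(\mathbb{Z}^n)$ by Schur's test.

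Second, I would transfer the Grushin setup for $P - \lambda$ from $H_\Lambda$ to $\mathscr{I}_\delta$. By \cite[Lemma 7.4]{vis}, the operator $P - \lambda : H_\Lambda \to H_\Lambda$ is Fredholm of index zero, so the eigenspace $\mathcal{E}_\lambda := \ker(P-\lambda) \subset H_\Lambda$ is finite-dimensional, of some dimension $m$. Choose $R_+, R_-$ detecting/resolving $\mathcal{E}_\lambda$ as in \S\ref{pre}, to produce a well-posed Grushin problem for $P-\lambda$ on $H_\Lambda \oplus \mathbb{C}^m$. I would then check that, restricted to $\mathscr{I}_\delta \oplus \mathbb{C}^m$, the same Grushin problem remains well-posed: the boundedness of $P$ on $\mathscr{I}_\delta$ established above, combined with the fact that $R_\pm$ may be chosen with range (resp. source) in $\mathscr{I}_\delta$ by a density-plus-finite-dimensional approximation argument, reduces everything to verifying that the inverse $E$ maps $\mathscr{I}_\delta$ into itself. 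Once this is secured, reading off $u$ from the Grushin inverse applied to $(0, u_-)$ with $u_- \in \mathbb{C}^m$ encoding $u \in \mathcal{E}_\lambda$ exhibits $u$ as a sum of elements of $\mathscr{I}_\delta$, yielding the conclusion.

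The main obstacle is the analytic-hypoellipticity step, namely that $E : \mathscr{I}_\delta \to \mathscr{I}_\delta$. Away from the characteristic set $\{p = \lambda\}$, $P - \lambda$ is elliptic in the analytic category (the symbol being holomorphic in both $z$ and $\zeta$ by \eqref{assumption3}), so a parametrix constructed via the analytic pseudodifferential calculus preserves $\mathscr{I}_\delta$. On the characteristic set itself, assumption \eqref{assumption4} furnishes $H_p F > 0$ on $\{p = \lambda\} \cap \{|\zeta| > C\}$, which gives the non-trapping of the Hamilton flow at high frequency; propagating analytic microlocal regularity along this flow (via $F \in S^1$ growing to infinity) prevents any analytic wavefront of $u$ from persisting, so $u$ must be analytic. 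This completes the well-posedness of the Grushin problem on $\mathscr{I}_\delta$ and hence the proof.
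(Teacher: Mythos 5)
Your high-level instinct is right — the heart of the proof is showing $\WF_a(u)=\emptyset$, which combines analytic ellipticity off $\{p=\lambda\}$ with a propagation-of-analytic-regularity argument using $H_pF>0$ on the characteristic set — but there are two substantive problems with the plan as written.

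First, the Grushin detour is both unnecessary and subtly circular. To run a well-posed Grushin problem for $P-\lambda$ on $\mathscr{I}_\delta\oplus\CC^m$, the operators $R_\pm$ must map into and out of $\mathscr{I}_\delta$; but in the natural construction $R_-w=\sum_j w^ju_j$ (and $R_+$ dually), so you need the eigenfunctions $u_j$ to already lie in $\mathscr{I}_\delta$, which is precisely the statement to be proved. The ``density-plus-finite-dimensional approximation'' you invoke would then have to be carried out with some care: replacing $u_j$ by approximants $\tilde u_j\in\mathscr{I}_\delta$ changes $E_+$, and one must argue that the two inverses agree on the vector encoding $u$. None of this is needed: once you know $u$ is analytic, you are done, and the Grushin machinery contributes nothing that couldn't be said directly.

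Second, and more seriously, the essential technical content is asserted rather than established. The sentence ``propagating analytic microlocal regularity along this flow (via $F\in S^1$ growing to infinity) prevents any analytic wavefront of $u$ from persisting'' is exactly the paper's \textbf{Claim} — that the microlocal support $\textrm{MS}(u)$ is invariant under $e^{tH_p}$ — and that claim is what requires proof. It is not a citation of a standard result: $p$ is a $0$th order symbol, so the usual principal-type propagation theorems do not apply off the shelf, and one must produce the argument. The paper does so in Steps $1$--$3$ by conjugating the FBI transform $T_\Lambda$ by weights $e^{\kappa\varphi/h}$, computing the imaginary part of the resulting principal symbol to get the commutator identity \eqref{commutator}, constructing an escape function $\varphi$ adapted to a flow box, and running a Carleman-type estimate. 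It also starts from a sharper a priori localization $\WF_a(u)\subset\{p=0,\,F=0\}$, obtained from the $H_\Lambda$-membership of $u$ via \cite[Propositions~2.3, 2.5]{vis}, rather than just $\WF_a(u)\subset\{p=\lambda\}$ from ellipticity. As it stands, your plan leaves the central step of the lemma as a black box. The Fourier-side boundedness of $P$ on $\mathscr{I}_\delta$ (your first step) is correct but is not used in the paper and does not help close this gap.
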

\begin{proof}
Since $u\in H_{\Lambda}$, by \cite[Proposition 2.5]{vis}, there exists $\psi\in S^1(T^*\mathbb{T}^n)$ such that
\begin{equation}
    \psi(x,\xi)=F(x,\xi)+O(\epsilon_0^2)_{S^1(T^*\mathbb{T}^n)}
\end{equation}
and 
\begin{equation}
    T_{\Lambda}u\in L^2(T^*\mathbb{T}^n, e^{2\psi/h}dxd\xi).
\end{equation}
Now by \cite[Proposition 2.3]{vis} and the fact that $u\in L^2$, we find
\begin{equation}
    \WF_a(u)\cap (U\times \Gamma)=\emptyset
\end{equation}
as long as $|\psi(x,\xi)|\geq |\xi|/C$ when $(x,\xi)\in U\times \Gamma\subset T^*\mathbb{T}^n$. Therefore
\begin{equation}
    \WF_a(u)\subset \{(x,\xi): p(x,\xi)=0, F(x,\xi)=0\}.
\end{equation}
Here $\WF_a(u)$ is the analytic wavefront set of $u$, see \cite[Definition 6.1]{wfa}.

To show $\WF_a(u)=\emptyset$, it remains to show that the analytic singularities of $u$ propagates along the bicharacteristics, since $H_pF(x,\xi)>0$ when $(x,\xi)\in \{p=0\}\cap\{|\xi|>C\}$. More precisely, we have the following

\noindent
\textbf{Claim}: $(x_0,\xi_0)\in \textrm{MS}(u)$ if and only if there exist $t_0\in \RR$ such that $\gamma:[0,t_0]\to T^*\mathbb{T}^n$, $\gamma(t)=e^{tH_p}(x_0,\xi_0)$ exists and $\gamma(t_0)\in \textrm{MS}(u)$. Here $\textrm{MS}(u)$ is the microlocal support of the distribution $u$, see \cite[Definition 3.2.1]{aprop}.

If the claim is true, then $\WF_a(u)\cap\{p=0\}=\emptyset$ by the the assumption \eqref{assumption4}. By \cite[Theorem 4.2.2]{aprop}, $\textrm{MS}(u)\subset \{p=0\}$. Hence we find $\WF_a(u)=\emptyset$. This implies $u\in \mathscr{I}_{\delta}$ for some $\delta>0$.

We now prove the claim.

\noindent
\textbf{Step 1}. We start by noting that for any real-valued $\varphi\in S^0(T^*\mathbb{T}^n)$, $\kappa>0$ we have 
\begin{equation}
\label{commutator}
    \langle (H_p\varphi) e^{\kappa\varphi/h}T_{\Lambda}u, e^{\kappa\varphi/h}T_{\Lambda}u \rangle_{L^2(T^*\mathbb{T}^n)}=O(\kappa+h/\kappa)\|e^{\kappa\varphi/h}T_{\Lambda}u\|^2_{L^2(T^*\mathbb{T}^n)}.
\end{equation}
In fact, by \cite[Proposition 3.3.1]{aprop}, we have
\begin{equation}
    e^{\kappa \varphi/h}T_{\Lambda}Pv=Qe^{\kappa \varphi/h}T_{\Lambda}v,
\end{equation}
with
\begin{equation}
    Q=e^{\kappa\varphi/h}\Op_h(p(x-\xi^*,x^*))e^{-\kappa\varphi/h}
\end{equation}
where $(x^*,\xi^*)$ are the dual variables of $(x,\xi)$ such that $\Op_h(x^*)=hD_{x}$, $\Op_h(\xi^*)=hD_{\xi}$. The principal symbol of $Q$ is 
\begin{equation}
\label{sq}
    \sigma(Q)(x,\xi,x^*,\xi^*)=q_0(x,\xi,x^*,\xi^*)=p(x-\xi^*-i\kappa\partial_{\xi}\varphi, x^*+i\kappa\partial_{x}\varphi).
\end{equation}
Therefore, by \cite[Theorem 3.5.1]{aprop},
\begin{equation}\begin{split}
    & \langle e^{\kappa \varphi/h}T_{\Lambda}Pv, e^{\kappa \varphi/h}T_{\Lambda}v \rangle_{L^2(T^*\mathbb{T}^n)}
    =  \langle Qe^{\kappa\varphi/h}T_{\Lambda}v, e^{\kappa\varphi/h}T_{\Lambda}v \rangle_{L^2(T^*\mathbb{T}^n)}\\
    = & \langle (\widetilde{q}(x,\xi;h)+R(h))e^{\kappa\varphi/h}T_{\Lambda}v, e^{\kappa\varphi/h}T_{\Lambda}v \rangle_{L^2(T^*\mathbb{T}^n)},
\end{split}\end{equation}
with $R(h)  =O(h^{\infty})_{L^2(T^*\mathbb{T}^n)\to L^2(T^*\mathbb{T}^n)}$ and
\begin{equation}
    \begin{split}
        \widetilde{q}  \sim \sum_{j=0}^{\infty}h^j\widetilde{q}_j \text{~~~~in~~~~} S^0(T^*\mathbb{T}^n), \quad  \widetilde{q}_0(x,\xi)  =q_0(x,\xi,\xi-\kappa\partial_{\xi}\varphi(x,\xi), \kappa\partial_x\varphi(x,\xi)).
    \end{split}
\end{equation}
By \eqref{sq}, we have
\begin{equation}
    \begin{split}
        \widetilde{q}_0(x,\xi) & = p(x-\kappa\partial_{x}\varphi-i\kappa\partial_{\xi}\varphi, \xi-\kappa\partial_{\xi}\varphi+i\kappa\partial_x\varphi)\\
        & = p(x,\xi)+\left((\partial_x p\partial_x\varphi-\partial_{\xi}p \partial_{\xi}\varphi)-iH_p\varphi\right)\kappa+O(\kappa^2)_{S^0(T^*\mathbb{T}^n)}.
    \end{split}
\end{equation}
Therefore
\begin{equation}\begin{split}
    & \Im\langle e^{\kappa \varphi/h}T_{\Lambda}Pv, e^{\kappa \varphi/h}T_{\Lambda}v \rangle_{L^2(T^*\mathbb{T}^n)}\\
    = & \langle (\Im\widetilde{q})e^{\kappa\varphi/h}T_{\Lambda}v, e^{\kappa\varphi/h}T_{\Lambda}v \rangle_{L^2(T^*\mathbb{T}^n)}+O(h^{\infty})\|e^{\kappa\varphi/h}T_{\Lambda}v\|_{L^2(T^*\mathbb{T}^n)}\\
    = & -\kappa\langle (H_p\varphi) e^{\kappa\varphi/h}T_{\Lambda}v, e^{\kappa\varphi/h}T_{\Lambda}v \rangle_{L^2(T^*\mathbb{T}^n)} + O(h+\kappa^2)\|e^{\kappa\varphi/h}T_{\Lambda}v\|^2_{L^2(T^*\mathbb{T}^n)}.
\end{split}\end{equation}
Since $Pu=0$, we have \eqref{commutator}.

\noindent
\textbf{Step 2}. We now construct the function $\varphi\in S^0(T^*\mathbb{T}^n)$ to be used in the next part to derive propagation estimates. Such functions are called ``escape functions'', we refer to \cite[Lemma E.48]{res} for the standard construction in the case of principle type propagation.

Let $\Sigma\subset T^*\mathbb{T}^n$ be a hypersurface that passes $\gamma(0)$ and is a cross section of the flow $e^{tH_p}$ such that
\begin{equation}
    \Phi: (-2\delta, t_1+\delta)\times \Sigma\to T^*\mathbb{T}^n, \quad \Phi(t,(y,\eta))=e^{tH_p}(y,\eta),
\end{equation}
is a diffeomorphism to its image $V:=\Phi((-2\delta,t_1+\delta)\times\Sigma)$.
Since $\gamma(t_1)\notin \textrm{MS}(u)$, we assume 
\begin{equation}
\label{expd}
    \|T_{\Lambda}\|_{L^2(W)}=O(e^{-\alpha/h}), \quad  W:= \Phi((t_1-\delta, t_1+\delta)\times \Sigma_1),
\end{equation}
with $\delta>0$ small enough, $\Sigma_1$ an open neighborhood of $\gamma(0)$, $\overline{\Sigma_1}\subset \Sigma$. Let $\Sigma_1^{\prime}$ be an open neighborhood of $\Sigma_1$ such that $\overline{\Sigma_1^{\prime}}\subset \Sigma_1$.
Let $\chi\in C_c^{\infty}(\Sigma, [0,1])$, $\rho\in C_c^{\infty}(\RR; [0,\alpha])$, such that
\begin{equation}
    \chi |_{\Sigma_1^{\prime}}\equiv 1, \quad \chi |_{\Sigma_1\setminus\Sigma_1^{\prime}}\neq 0, \quad \chi |_{\Sigma\setminus \Sigma_1}\leq 1/4,
\end{equation}
and
\begin{equation}
    \supp\rho\subset (-2\delta, t_1+\delta), \quad \rho(0)=\alpha/2, \quad \rho^{\prime}|_{(-2\delta, -\delta)}\geq 0, \quad \rho^{\prime}|_{(-\delta, t_1-\delta)} \geq \beta
\end{equation}
with $\beta>0$.
Now we put
\begin{equation}
    \varphi(\Phi(t,(y,\eta))):=\chi(y,\eta)\rho(t), \quad (t,(y,\eta))\in (-2\delta, t_1+\delta)\times \Sigma
\end{equation}
and extend $\varphi$ by $0$ outside $V$. Now we have $\varphi\in S^0(T^*\mathbb{T}^n)$ which satisfies
\begin{equation}
    0\leq \varphi\leq \alpha, \quad \supp\varphi\subset V, \quad H_p\varphi|_{\Phi(t,(y,\eta))}=\chi(y,\eta)\rho^{\prime}(t).
\end{equation}

\noindent
\textbf{Step 3}.
Let $V_1:=\Phi((-\delta,t_1-\delta)\times \Sigma_1)$, then 
\begin{equation}
    H_p\varphi|_{V_1} \geq 1/C_1>0.
\end{equation}
By \eqref{commutator}, we find
\begin{equation*}
    \begin{split}
         \|e^{\kappa\varphi/h}T_{\Lambda}u\|^2_{L^2(V_1)}
        \leq & C_1\|e^{\kappa\varphi/h}T_{\Lambda}u\|^2_{L^2(T^*\mathbb{T}^n\setminus V_1)}
         +CC_1(\kappa+h/\kappa)\|e^{\kappa\varphi/h}T_{\Lambda}u\|^2_{L^2(T^*\mathbb{T}^n)}.
    \end{split}
\end{equation*}
Let $\kappa=1/(1+2CC_1)$, then for $0<h<h_0$ where $h_0>0$ is sufficiently small we have
\begin{equation}
    \|e^{\kappa\varphi/h}T_{\Lambda}u\|_{L^2(V_1)}\leq C(h_0)\|e^{\kappa\varphi/h}T_{\Lambda}u\|_{L^2(T^*\mathbb{T}^n\setminus V_1)}
\end{equation}
with $C(h_0)>0$.
Now we denote
\begin{equation}\begin{split}
    & D_1:=T^*\mathbb{T}^n\setminus V, \quad D_2:=W,\\
    & D_3:=\Phi((-\delta,t_1+\delta)\times (\Sigma\setminus \Sigma_1)), \quad D_4:=\Phi((-2\delta,-\delta)\times \Sigma).
\end{split}\end{equation}
Then $T^*\mathbb{T}^n\setminus V_1=\sqcup_{j=1}^4 D_j$
and
\begin{equation}
    \varphi|_{D_1}=0, \quad \varphi|_{D_2}\leq \alpha, \quad \varphi|_{D_3}\leq \alpha/4, \quad \varphi|_{D_4}\leq \alpha/2-\beta\delta.
\end{equation}
Let $\beta<\alpha/(4\delta)$, then we have
\begin{equation}\begin{split}
    \|e^{\kappa\varphi/h}T_{\Lambda}u\|_{L^2(V_1)}
    \leq & C\sum_{j=1}^4\|e^{\kappa\varphi/h}T_{\Lambda}u\|_{L^2(D_j)}\\
    = & O(1+e^{\max\{ {(\kappa-1)\alpha/h}, {\kappa\varphi/(4h)}, \kappa(\alpha/2-\beta\delta)/h\}} )
    =  O(1+e^{\kappa(\alpha/2-\beta\delta)}).
\end{split}\end{equation}
Here we used \eqref{expd}.
Now let $V_1^{\prime}$ be an open neighborhood of $\gamma(0)$ such that
\begin{equation}
    \overline{V_1^{\prime}}\subset V_1, \quad \varphi|_{V_1^{\prime}}\geq \alpha/2-\beta\delta/2.
\end{equation}
Then we have
\begin{equation}
    \|T_{\Lambda}u\|_{L^2(V_1^{\prime})}=O(e^{-\kappa\beta\delta/h}).
\end{equation}
This implies $\gamma(0)\notin \textrm{MS}(u)$.
\end{proof}

\section{Proof of Theorem \ref{theorem1}}
\label{pf1}
We now give a proof to Theorem \ref{theorem1}.

\begin{proof}[Proof of Theorem \ref{theorem1}]
\textbf{Part 1}.
Let $\{u_1, \cdots, u_m\}\subset H_{\Lambda}\cap L^2(\mathbb{T}^n)$ be an orthonormal basis of the eigenspace with eigenvalue $\lambda$.
We consider the Grushin problem
\begin{equation}
    \mathscr{P}(s,\zeta)=\begin{pmatrix} P(s)-\zeta & R_- \\ R_+ & 0 \end{pmatrix}: H_{\Lambda}\times \CC^m \to H_{\Lambda}\times \CC^m,
\end{equation}
where $R_-: \CC^m\to H_{\Lambda}$ and $R_+: H_{\Lambda}\to \CC^m$ are defined by
\begin{equation}
    R_+w=(\langle w,u_1 \rangle_{L^2(\mathbb{T}^2)}, \cdots, \langle w,u_m \rangle_{L^2(\mathbb{T}^2)}), \quad R_-(w_-^1,\cdots, w_-^m)=\sum_{j=1}^m w_-^ju_{j}.
\end{equation}
By Lemma \ref{aeig}, $u_{j}\in \mathscr{I}_{\delta}$ for small $\delta>0$. Therefore
\begin{equation}
    |\langle u,u_{j} \rangle_{L^2(\mathbb{T}^n)}|\leq \|u_{j}\|_{\mathscr{I}_{\delta}}\|u\|_{\mathscr{I}_{-\delta}}\leq C \|u_{j}\|_{\mathscr{I}_{\delta}}\|u\|_{H_{\Lambda}}
\end{equation}
since $\mathscr{I}_{-\delta}$ is the dual space of $\mathscr{I}_{\delta}$ relative $L^2$ pairing on $\mathbb{T}^n$ and $\mathscr{I}_{\delta}\subset H_{\Lambda}\subset \mathscr{I}_{-\delta}$ (see for instance \cite[\S 4]{vis}). This implies $R_{\pm}$ are well-defined and bounded operators.

When $s=0$ and $\zeta$ is near $\lambda$, by \cite[Lemma 7.9]{vis}, there exist $A(\zeta): H_{\Lambda}\to H_{\Lambda}$ that is analytic in $\zeta$ and
\begin{equation}
    R(0,\zeta)=A(\zeta)+\frac{\Pi_{\lambda}}{\lambda-\zeta}.
\end{equation}
Here $\Pi_{\lambda}: L^2\to L^2$ is the $L^2$ orthogonal projection onto the eigenspace of $\lambda$: $\Pi_{\lambda}u=\sum_{j=1}^m\langle u,v_{j} \rangle v_j$.

If we put
\begin{equation}
    \mathscr{E}(\zeta):=\begin{pmatrix} A(\zeta) & E_+ \\ E_- & E_{-+} \end{pmatrix}: H_{\Lambda}\times \CC^m \to H_{\Lambda}\times \CC^m,
\end{equation}
where $E_+: \CC^m\to H_{\Lambda}$, $E_-: H_{\Lambda}\times \CC^m$ are defined by 
\begin{equation}
\label{epm}
    E_+(v_+^1,\cdots, v_+^m)=\sum_{j=1}^m v_+^{j}u_j, \quad E_-v=(\langle v,u_1 \rangle_{L^2(\mathbb{T}^n)}, \cdots, \langle v,u_m \rangle_{L^2(\mathbb{T}^n)}),
\end{equation}
and $E_{-+}=(\zeta-\lambda)I_m$.
One can check that $\mathscr{E}(\zeta)$ is the inverse of $\mathscr{P}(0,\zeta)$. 
By \eqref{assumption2}, there exists $0<s_1<s_0$ such that for $s\in (-s_1,s_1)$, 
\begin{equation}
    \max\left(\|(P(s)-P)A(\zeta)\|_{H_{\Lambda}\to H_{\Lambda}}, \|A(\zeta)(P(s)-P)\|_{H_{\Lambda}\to H_{\Lambda}}\right)<1.
\end{equation}
Hence by \cite[Lemma C.3]{res}, for $(s,\zeta)\in (-s_1,s_1)\times U$, where $U$ is an open neighborhood of $0\in \CC$, $\mathscr{P}(s,\zeta)$ has inverse 
\begin{equation}
\label{ginv}
    \mathscr{E}(s,\zeta)=\begin{pmatrix} E(s,\zeta) & E_+(s,\zeta) \\ E_-(s,\zeta) & E_{-+}(s,\zeta) \end{pmatrix}
\end{equation}
such that $\mathscr{E}(s,\zeta)$ is analytic in $s,\zeta$ and $\mathscr{E}(0,\zeta)=\mathscr{E}(\zeta)$.
Since $P(s)-\zeta$ is invertible if and only if $E_{-+}(s,\zeta)$ is invertible, we know the resonances $\lambda_j(s)$, $1\leq j\leq m$, of $P(s)$ near $\lambda_0$ must satisfy 
\begin{equation}
    \textrm{det}(E_{-+}(s,\lambda_j(s)))=0, \quad \lambda_j(0)=\lambda.
\end{equation}
Let $L(s,\zeta):=\textrm{det}(E_{-+}(s,\zeta))$. Then $L(s,\zeta)$ is analytic in $(s,\zeta)\in (-s_1, s_1)\times U$, where $s_1>0$, $U$ is a neighborhood of $0\in \CC$, and
\begin{equation}
    L(0,\zeta)=\textrm{det}(E_{-+}(0,\zeta))=(\zeta-\lambda)^m.
\end{equation}
By Weierstrass preparation theorem (see for instance \cite[Theorem 8.2.15]{complex}), there exist analytic functions $g_{j}$, $g_{j}(\lambda)=0$, $0\leq j \leq m-1$, and analytic function $N(s,\zeta)$, $N(s,\zeta)\neq 0$ near $(0,\lambda)$, such that
\begin{equation}
    L(s,\zeta)=\left((\zeta-\lambda)^m+g_{m-1}(s)(\zeta-\lambda)^{m-1}+\cdots+g_0(s)\right)N(s,\zeta)
\end{equation}
Hence by \cite[Chapter 2, \S 1]{kato}, $\lambda_{j}$ has Puiseux series expansions. For a Puiseux cycle, there exists $p\in \NN$ such that
\begin{equation}
    \lambda_{\ell}(s)=\lambda+c_1\omega^{\ell}s^{1/p}+c_2\omega^{2\ell}s^{2/p}+\cdots, \quad 1\leq \ell \leq p
\end{equation}
where $\omega=e^{2\pi i/p}$. If $\lambda_{\ell}\in \RR$, then $p=1$ and $c_j\in \RR$, $j\geq 1$. If $\lambda_{\ell}\notin \RR$, then we have \eqref{mnr} using the fact that $\Im\lambda_{\ell}\leq 0$.

\noindent
\textbf{Part 2}.
Now we consider the case when $\lambda$ is a simple eigenvalue and prove the Fermi golden rule.

Differentiate $E_{-+}(s,\lambda(s))=0$ in $s$ and we find
\begin{equation*}
    \partial_{s}E_{-+}(0,\lambda)+\partial_{\zeta}E_{-+}(0,\lambda)\dot{\lambda}=0,
\end{equation*}
\begin{equation*}
\begin{split}
    & \partial_{s}^2E_{-+}(0,\lambda)+2\partial_{s}\partial_{\zeta}E_{-+}(0,\lambda)\dot{\lambda}
    + \partial_{\zeta}^2E_{-+}(0,\lambda)\dot{\lambda}^2 + \partial_{\zeta}E_{-+}(0,\lambda)\ddot{\lambda}=0.
\end{split}
\end{equation*}
Differentiate $\mathscr{P}(s,\zeta)\mathscr{E}(s,\zeta)=I$ in $s$ and we find
\begin{equation*}
    \partial_{s}E_{-+}(0,\lambda)=-E_-\dot{P}E_+, \quad \partial_s^2E_{-+}(0,\lambda)=2E_-\dot{P}E\dot{P}E_+-E_-\ddot{P}E_+.
\end{equation*}
Note now that $\partial_{\zeta}^2E_{-+}(0,\lambda)=0$, $\partial_{s}\partial_{\zeta}E_{-+}(0,\lambda)=0$, $\partial_{\zeta}E_{-+}(0,\lambda)=1$, $\dot{\lambda}(0)\in \RR$, hence we have
\begin{equation*}\begin{split}
    \Im\ddot{\lambda}
    = & -\Im\partial_{s}^2E_{-+}(0,\lambda) 
    =  -2\Im\langle A(\lambda)\dot{P}u, \dot{P}u \rangle+\Im\langle \ddot{P}u,u \rangle
    = -2\Im\langle A(\lambda)\dot{P}u,\dot{P}u \rangle.
\end{split}\end{equation*}
Here we used the self-adjointness of $\ddot{P}$.
Hence we find
\begin{equation}
    \Im\ddot{\lambda}=-2\Im\langle A(\lambda) \dot{P}u, \dot{P}u \rangle=-2\Im\langle \Pi_{\perp}R(\lambda)\Pi_{\perp}\dot Pu, \dot Pu \rangle.
\end{equation}
The latter equality follows from the fact that $A(\lambda)u=0$ and $A(\lambda)^*u=0$.
\end{proof}

\Remarks
1. We can derive an alternative formula for $\ddot{\lambda}(0)$ by using the operator $G_0^+$ constructed in \cite[Lemma 5.19]{scattering}:
\begin{equation}
\label{alt2nd}
    \ddot{\lambda}(0)=-4\pi^2\int_{\mathbb{S}^1}|G_0^+(R(\lambda)\Pi_{\perp}\dot{P}v_0)|^2dS.
\end{equation}
In fact, let $\mathcal{B}$ be defined by \cite[(6.2)]{scattering}. By the boundary pairing formula \cite[Proposition 6.5]{scattering}, we have
\begin{equation}\begin{split}
    & \langle (R(\lambda)-R(\lambda)^*)u,v \rangle 
    =  \langle R(\lambda)u,v \rangle-\langle u, R(\lambda)v \rangle\\
    = & \langle R(\lambda)u, (P-\lambda)R(\lambda)v \rangle-\langle R(\lambda)u, (P-\lambda)R(\lambda)v \rangle \\
    = & -\mathcal{B}(R(\lambda)u, R(\lambda)v)
    =  (2\pi)^2i\int_{\mathbb{S}^1}G_0^+(R(\lambda)u)\cdot G_0^+(R(\lambda)v)dS.
\end{split}\end{equation}
Here we used the fact $R(\lambda)u, R(\lambda)v\in I^0(\Lambda^+)$, see \cite[Lemma 4.1]{force} or \cite[Lemma 3.3]{scattering}. Hence by Theorem \ref{theorem1}, we have
\begin{equation}
    \begin{split}
        \ddot{\lambda}(0) & =-2\Im\langle \Pi_{\perp}R(\lambda)\Pi_{\perp}\dot{P}w,\dot{P}w \rangle = -\tfrac{1}{i}\langle (R(\lambda)-R(\lambda)^*)\Pi_{\perp}\dot{P}w, \Pi_{\perp}\dot{P}w. \rangle \\
       & = -4\pi^2\int_{\mathbb{S}^1}|G_0^+(R(\lambda)\Pi_{\perp}\dot{P}v_0)|^2dS.
    \end{split}
\end{equation}

\noindent
2. We can see from \eqref{alt2nd} that $\ddot{\lambda}(0)\leq 0$ and $\ddot{\lambda}(0)=0$ if and only if $\dot{P}v_0=cv_0$ for some $c\in \CC$. This implies the absence of eigenvalues for generic perturbations.

\section{Proof of Theorem 2}
\label{pf2}
In this section, we prove Theorem \ref{theorem3} by proposing a Grushin problem.
\begin{proof}[Proof of Theorem \ref{theorem3}]
As in \cite[(7.13)]{vis},  we put
\begin{equation}
    P_{q,t}:=P+it\Delta_{\mathbb{T}^n}-iQ, \quad Q:=S_{\Lambda}\Pi_{\Lambda}q\Pi_{\Lambda}T_{\Lambda}
\end{equation}
with $q\in C_c^{\infty}(\Lambda;[0,\infty))$ satisfies conditions in \cite[Lemma 7.6]{vis}. For the definition of $S_{\Lambda}$, $\Pi_{\Lambda}$, see \cite[\S 4, \S 5]{vis}. By \cite[Lemma 7.6]{vis}, for any $\epsilon>0$ sufficiently small, there exists $q=q(\epsilon)$ such that
\begin{equation}
    R_{q,t}:=(P_{q,t}-\zeta)^{-1}: H_{\Lambda}\to H_{\Lambda}
\end{equation}
exists for $t\in [0,t_0]$, $\zeta\in (-\zeta_0, \zeta_0)+i(-\theta\zeta_0,\infty)$. 
Note that
\begin{equation}
    R_{0,t}(\zeta)=(I+iR_{q,t}(\zeta)Q)^{-1}R_{q,t}(\zeta),
\end{equation}
hence the eigenvalues of $P(t)$ in $(-\zeta_0,\zeta_0)\times (-\theta\zeta_0,\infty)$ are values of $\zeta$ such that $I+iR_{q,t}(\zeta)Q:H_{\Lambda}\to H_{\Lambda}$ is not invertible. Since
\begin{equation}
    R_{0,0}(\zeta)=A(\zeta)+\frac{u\otimes u}{\lambda-\zeta},
\end{equation}
we have
\begin{equation}
    (I+iR_{q,0}(\zeta)Q)^{-1}=A(\zeta)(P_{q,0}-\zeta)+\frac{u\otimes u}{\lambda-\zeta}(P_{q,0}-\zeta)
\end{equation}

We now consider the Grushin problem
\begin{equation}
    \begin{pmatrix}I+iR_{q,t}(\zeta)Q & R_- \\ R_+ & 0\end{pmatrix}: H_{\Lambda}\times \CC \to H_{\Lambda}\times \CC,
\end{equation}
where $R_-: \CC \to H_{\Lambda}$, $R_+: H_{\Lambda}\to \CC$ are defined by
\begin{equation*}
    R_+w=\langle w, u \rangle_{L^2(\mathbb{T}^n)}, \quad R_-w_-= w_- R_{q,0}(\zeta)u.
\end{equation*}
As in the proof of Theorem \ref{theorem1}, this Grushin problem is wellposed and has an inverse
\begin{equation}
    \begin{pmatrix} E_{q,t}(\zeta) & E_{+,q,t}(\zeta) \\ E_{-,q,t}(\zeta) & E_{-+,q,t}(\zeta) \end{pmatrix}: H_{\Lambda}\times \CC \to H_{\Lambda}\times \CC
\end{equation}
with
$$E_{q,0}(\zeta)=A(\zeta)(P_{q,0}-\zeta), \ \ E_{-+,q,0}(\zeta)=\zeta-\lambda,$$ 
\begin{equation*}
    E_{-,q,0}(\zeta)v=\langle (P_{q,0}-\zeta)v,u \rangle_{L^2(\mathbb{T}^n)}, \quad E_{+,q,0}(\zeta)v_+=v_+u.
\end{equation*}
Now we have
\begin{equation}\begin{split}
    \dot{\lambda}
    = & -E_-\tfrac{d}{dt}(I+iR_{q,t}(\lambda)Q)E_+ 
    =  \langle (P_{q,0}-\lambda)R_{q,0}(\lambda)\Delta R_{q,0}(\lambda)Qu, u \rangle_{L^2(\mathbb{T}^n)} \\
    = & i\langle \Delta u, u \rangle_{L^2(\mathbb{T}^n)}=-i\|\nabla u\|_{L^2(\mathbb{T}^n)}^2.
\end{split}\end{equation}
This completes the proof.
\end{proof}


\begin{thebibliography}{0}

\bibitem[CdV83]{co1} Yves Colin de Verdi\`ere,
    \emph{Pseudo-Laplacian. II,\/ }
    Ann. Inst. Fourier \textbf{33}(1983), 87-113.
    
\bibitem[CdV19]{co2} Yves Colin de Verdi\`ere,
    \emph{Spectral theory of pseudo-differential operators of degree 0 and applications to forced waves,\/ }
    \arXiv{1804.03367}, to appear in Analysis \& PDE.
    
\bibitem[CS-L20]{attractor} Yves Colin de Verdi\`ere and Laure Saint-Raymond,
    \emph{Attractors for two dimensional waves with homogeneous Hamiltonian of degree 0, \/}
    Comm. Pure Appl. Math, \textbf{73}(2020), 421-462.
    
\bibitem[DyZw19a]{res} Semyon Dyatlov and Maciej Zworski,
    \emph{Mathematical theory of scattering resonances,\/ }
    Graduate Studies in Mathematics \textbf{200}, AMS 2019.
    
\bibitem[DyZw19b]{force} Semyon Dyatlov and Maciej Zworski,
    \emph{Microlocal analysis of forced waves,\/ }
    Pure and Applied Analysis, \textbf{1}(2019), 359-394.

\bibitem[GaZw19]{vis} Jeffrey Galkowski and Maciej Zworski,
    \emph{Viscosity limits for 0th order pseuddifferential operators,\/ }
    \arXiv{1912.09840}.
    
\bibitem[GaZw20]{keldysh} Jeffrey Galkowski and Maciej Zworski,
    \emph{Analytic Hypoellipticity of Keldysh operators,\/ }
    \arXiv{2003.08106}.
    
\bibitem[HeSj86]{hs} Bernard Helffer and Johannes Sj\"ostrand,
    \emph{Resonances en limite semiclassique,\/}
    Bull. Soc. Math. France \textbf{114}, no. 24-25, 1986.

\bibitem[Ho74]{puiseux} James Howland,
    \emph{Puiseux series for resonances at an embedded eigenvalue, \/}
    Pacific J. Math. \textbf{55}(1974), 157-176.
    
\bibitem[H\"oII]{horII} Lars H\"ormander,
    \emph{The Analysis of Linear Partial Differential Operators II. Differential Operators with Constant Coefficients,\/}
    Springer Verlag, 1983.
    
\bibitem[Ka80]{kato} Tosio Kato,
    \emph{Perturbation theory for linear operators,\/}
    Springer Verlag, Berlin, Heidelberg, 1980.

\bibitem[LeZw16]{quangraph} Minjae Lee and Maciej Zworski,
    \emph{A Fermi golden rule for quantum graphs,\/ }
    J. Math. Phys. \textbf{57}, 092101(2016).
    
\bibitem[Ma02]{aprop} Andr\'e Martinez,
    \emph{An introduction to semiclassical and microlocal analysis,\/}
    Springer, 2002.

\bibitem[PhSa92]{cusp} R. Phillips and P. Sarnak,
    \emph{Perturbation theory for the Laplacian on automorphic functions,\/ }
    J. Amer. Math. Soc. \textbf{5}(1992), 1-32.

\bibitem[Ra73]{ralston} James Ralston,
    \emph{On stationary modes in inviscid rotating fluid, \/}
    J. Math. Anal. Appl. \textbf{44}(1973), 366-383.
    
\bibitem[Sc05]{complex} Volker Scheidemann,
    \emph{Introduction to complex analysis in several variables,\/}
    Springer, 2005.
    
\bibitem[Si73]{bs} Barry Simon, 
    \emph{Resonances in n–body quantum systems with dilation analytic potentials and the foundations of time-dependent perturbation theory,\/} 
    Ann. of Math. \textbf{97}(1973), 247–274.
    
\bibitem[Sj82]{wfa} Johannes Sj\"ostrand,
    \emph{Singularit\'es analytiques microlocales,\/}
    Ast\'erisque, volume 95, 1982. 
    
\bibitem[Sj96]{den} Johannes Sj\"ostrand,
    \emph{Density of resonances for strictly convex analytic obstacles.\/}
    Can. J. Math. \textbf{48}(1996), 397-447.
    
\bibitem[SjZw07]{elementary} Johannes Sj\"ostrand and Maciej Zworski,
    \emph{Elementary linear algebra for advanced spectral problems,\/}
    Annales de l'Institut Fourier, \textbf{57}(2007), 2095-2141.
    
\bibitem[Ta19]{circle} Zhongkai Tao,
    \emph{0-th order pseudodifferential operators on the circle,\/}
    \arXiv{1909.06316}.

\bibitem[Wa19]{scattering} Jian Wang,
    \emph{The scattering matrix for 0th order pseudodifferential operators,\/ }\\
    \arXiv{1909.06484}.
    
\bibitem[Zw12]{semi} Maciej Zworski,
    \emph{Semiclassical analysis,\/}
    Graduate Studies in Mathematics \textbf{138}, AMS, 2012.
    
\end{thebibliography}
\end{document}